\def\RR{{{\mathbb R}}}
\def\ZZ{\mathbb Z} 
\def\CC{\mathbb C} 
\def\phi{\varphi}
\def\ba{{\mathbf a}}
\def\bc{{\mathbf c}}
\def\blambda{{\mathbf \Lambda}}
\def\bPhi{{\mathbf \Phi}}
\def\bPsi{{\mathbf \Psi}}
\def\dprod#1#2{{\displaystyle{\prod_{#1}^{#2}}}}
\def\supp{{\rm supp}\ }
\newtheorem{theorem}{Theorem}[section]
\newtheorem{corollary}[theorem]{Corollary}
\numberwithin{equation}{section}
\title{Bell-shaped Nonstationary Refinable Ripplets}
\author{Francesca Pitolli 
\thanks{{\it Dept. SBAI, University of Roma ''La Sapienza''}, 
Via A. Scarpa 16, 00161 Roma, Italy  
e-mail: \tt{francesca.pitolli@sbai.uniroma1.it}} 
\\ \\ {\it Dedicated to Laura Gori on the occasion of her 80th birthday}
}
\date{}
\begin{document}

\maketitle

\begin{abstract}
We study the approximation properties of the class of nonstationary refinable ripplets introduced in \cite{GP08}.
These functions are solution of an infinite set of nonstationary refinable equations and are 
defined through sequences of scaling masks that have an explicit expression.
Moreover, they are variation-diminishing and highly localized in the scale-time plane, properties that make them particularly 
attractive in applications. Here, we prove that they enjoy Strang-Fix conditions and 
convolution and differentiation rules and that they are bell-shaped.
Then, we construct the corresponding minimally supported nonstationary prewavelets and 
give an iterative algorithm to evaluate the prewavelet masks. Finally, we give a procedure to construct the associated nonstationary biorthogonal bases and filters to be used in efficient decomposition and reconstruction algorithms. 

As an example, we calculate the prewavelet masks and the nonstationary biorthogonal filter pairs corresponding to the $C^2$ nonstationary scaling functions in the class and construct the corresponding prewavelets and biorthogonal bases. 
A simple test showing their good performances in the analysis of a spike-like signal is also presented. \\
{\bf Keywords}:
total positivity, variation-dimishing, refinable ripplet, bell-shaped function, 
nonstationary prewavelet, nonstationary biorthogonal basis \\
{\bf MSC}: 41A30 \ $\cdot$ \ 42C40 \ $\cdot$ \ 65T60
\end{abstract}

\maketitle

\section{Introduction}
A {\em ripplet} is a function $f$ whose integer translate are {\em totally positive} \cite{Ka}, 
i.e.  for any ordered real numbers $x_1<\cdots < x_r$, and any 
ordered integers $\alpha_1 < \cdots < \alpha_r$, $r \ge 1$, it holds 
\begin{equation}
{\rm det} \,\bigl ( f(x_i-\alpha_\ell) \bigr )_{1\le i,\ell \le r}\ge 0.
\end{equation} 
Total positivity implies that the integer translates of $f$ are {\em variation diminishing}, i.e.
for any finite sequence ${\mathbf c}={\{c_\alpha\}}$
\begin {equation} 
S^-\bigl(\sum_{\alpha }\, c_\alpha \,f(\cdot-\alpha) \bigr)\le S^-(\mathbf c)\,,
\label {var.dim.1}
\end {equation}
where $S^-$ denotes the strict sign changes of its argument.
The disequality (\ref{var.dim.1}) in turn implies that the system $\{f(\cdot-\alpha)\}$ has shape-preserving properties, which are known to play a crucial role in several applications, from approximation of data to CAGD \cite{GM96,Mi95}.

The concept of a ripplet was  first introduced by  Goodman and  Micchelli in \\ \cite{GM92}, where
the authors focused their interest on {\em two-scale refinable ripplets}, i.e. ripplets that are solution of a 
{\em two-scale refinable equation}
\begin{equation}  \label{stazref}
\varphi = \sum_{\alpha } \, a_\alpha \, \varphi(2\cdot-\alpha)\,,
\end{equation}
where the {\em scaling mask} $\ba=\{a_\alpha\}$ is a suitable real sequence.
Well known examples of refinable ripplets are the cardinal B-splines, i.e. the polynomial B-splines on integer nodes.
Starting from the seminal paper of Goodman and Micchelli, many families of two-scale refinable ripplets were  
constructed (see, for instance, \cite{CW97}, \cite{GP00}). More recently,  
$M$-scale refinable ripplets, with dilation $M$ greater than 2, 
were addressed in \cite{GS04} and refinable ripplets with dilation 3 
were constructed in \cite{GPS11} (see also \cite{GPS13}).

The interest in refinable ripplets lies in the fact that they give rise to 
convergent subdivision algorithms for the reconstruction of curves and the limit curves they generate 
preserve the shape of the initial data \cite{Go96,Mi95}. Refinable ripplets can also be proved to solve 
the cardinal interpolation problem \cite{Mi95}. For instance, the construction of cardinal interpolants 
by by means of the refinable ripplets in \cite{GP00} was addressed in \cite{Pi98}.
 
Refinable ripplets have good properties not only in the context of geometric modeling and function approximation  
but they also enjoy some optimality properties useful in signal processing. 
In fact, refinable ripplets induces a multiresolution analysis in $L_2(\RR)$ that allows us to generate
a nested sequence of wavelet spaces. Actually, it is always possible to construct compactly supported semiorthogonal wavelet bases 
starting from a refinable ripplet \cite{Mi91}.
Moreover, refinable ripplets have asymptotically, i.e. when their smoothness tends to infinity,
the same optimal time-frequency window achieved by the Gaussian function \cite{CW97}.
Since the rate of convergence  can be proved to be very fast for a large class of ripplets,
including, for instance, the refinable ripplets introduced in \cite{GP00},
refinable ripplets can approximate
the Gaussian with high accuracy giving rise to efficient algorithms for signal analysis \cite{CGL04}.
Finally, a ripplet can be seen as a discrete kernel
satisfying a causality property so  making the refinable ripplets particularly attractive 
in the  scale-time analysis of signals \cite{GGL07}.

All the refinable ripplets quoted above are {\em stationary} in the sense that
they satisfy the functional equation (\ref{stazref}) with the same mask sequence at each dyadic scale. 
For this reason usually (\ref{stazref}) is referred to as a {\em stationary two-scale equation}.
From the point of view of signal processing this means that the same 
analysis and synthesis filters are used at all dyadic scales \cite{SN96}. 
Nevertheless, the use of the same set of filters at each scale does not give great flexibility in applications,
especially when some preprocessing steps with different filters are required.
From the functional point of view, the use of different filters at different scales gives rise to a {\em nonstationary multiresolution analysis}
(see, for instance, \cite{BDR93}, \cite{GL99}, \cite{Ha10}, \cite{NPS11}).
A nonstationary multiresolution analysis can be generated by a set of {\em nonstationary refinable functions}, i.e. an infinite set of functions 
$\{\phi^m: m \in \ZZ_+\}$ satisfying an infinite set of {\em nonstationary two-scale equations}
\begin{equation} 
\phi^m=\sum_{\alpha\in \ZZ}\ a_{\alpha}^{m}\ \phi^{m+1}(\cdot-2^{-(m+1)}\alpha),   \quad m\in \ZZ_+\,,  
\label{nseq_int}
\end{equation}
for some infinite sequence of masks $\{\ba^m: m \in \ZZ_+\}$, 
each mask $\ba^m=\{a_\alpha^m\}$ being different at each dyadic scale.

The use of different scaling masks at different scales allows us to construct refinable functions endowed with properties that cannot be achieved in the stationary setting. For instance,
the nonstationary process generated by the B-spline masks of increasing support gives rise to a refinable function that is compactly supported while belongs to $C^{\infty}(\RR)$ \cite{DR95,NPS11}. 
Other families of $C^{\infty}(\RR)$ nonstationary refinable functions based on 
pseudo-spline masks were constructed in \cite{HZ08}.
Exponential splines too can be associated to a nonstationary process \cite{BDR93,DL02}.
They reproduce exponential polynomials and the associated wavelet bases can be successfully used in the analysis of signals 
with exponential behavior \cite{VBU07}. 
Families of nonstationary refinable ripplets were introduced in \cite{CGP07} and \cite{GP08}. 
In particular, the latter are highly localized in the scale-time plane, a
property that is crucial in many applications. For this reason, in the present paper we focus our interest
in this family of nonstationary refinable ripplets and prove that 
these ripplets enjoy several properties which are relevant in the context of both geometric modeling and signal processing. 
Then, we construct the associated wavelet bases.
We notice that, since the refinable ripplets we are considering are non orthogonal,  
compactly supported orthogonal wavelets belonging to the space generated by their translates do not exists. 
This motivates us to construct nonstationary {\em semiorthogonal prewavelets}. In fact, giving up the orthogonality
condition we can build wavelet bases with compact support.  
Moreover, we give a procedure to construct the nonstationary compactly supported biorthogonal bases associated with the
nonstationary refinable ripplets we are considering. Since we are interested in implementing efficient nonstationary 
decomposition and reconstruction algorithms for signal processing,  we construct also the corresponding pairs of nonstationary filters 
to be used in the analysis and synthesis of a given discrete signal. 

The paper is organized as follows.
In Section~2 we give some basic definition concerning nonstationary multiresolution analysis and wavelet spaces and 
recall some results about the existence of solutions of the nonstationary refinable 
equations (\ref{nseq_int}). The class of nonstationary refinable functions we are interested in 
is described in Section~3, 
while in Section~4 we proved some approximation properties that were not addressed in \cite{GP08}. 
In Section~5 we construct the nonstationary prewavelet bases associated with the nonstationary refinable ripplets
in the class and give an efficient algorithm to evaluate nonstationary prewavelet masks.  
The construction of compactly supported biorthogonal bases and filters, which give rise to efficient decomposition and reconstruction formulas for discrete signals, is addressed in Section~6.
Finally, in Section~7 some examples of nonstationary masks and refinable bases are given
and the corresponding nonstationary filters are constructed. A simple test on the analysis of a spike-like
signal is also shown.

\section{Nonstationary Multiresolution Analysis and Wavelet Spaces}

Wavelet spaces are usually constructed starting from a multiresolution analysis that is a
sequence $\{V^m\}$ of nested subspaces which are dense in $L_2(\RR)$ and enjoy the separation property.
Thus, the corresponding wavelet space $W^m\subset V^{m+1}$ is defined as the orthogonal complement of 
$V^m$ in $V^{m+1}$  \cite{Dau92}.
In the stationary case all the spaces $\{ V^m \}$ are generated by the dilates and translates of a unique
refinable function $\phi\in V^0$, solution of the stationary two-scale equation (\ref{stazref}), while the spaces $W^m$
are generated by the dilates and translates of a unique {\em wavelet} $\psi\in W^0 \subset V^1$.

In contrast, in the nonstationary setting any space $V^m$ (resp. $W^m$) is generated by the 
$2^{-m}$-shifts of a different refinable function $\phi^m$ (resp. wavelet $\psi^m$), which are not dilates of one another.
As a consequence, the spaces $\{V^m\}$ and $\{W^m\}$ are not scaled versions of the spaces $V^0$ and $W^0$, respectively.

Nonstationary multiresolution analysis are addressed in several papers in the literature
(see, for instance, \cite{BDR93}, \cite{CD96}, \cite{DL02}, \cite{GL99}, \cite{Ha10}, \cite{Ha12}, \cite{NPS11} and references therein). 
Here, following \cite{BDR93} and \cite{NPS11}, we say that a space sequence $\{ V^m: m \in \ZZ_+\}$
forms a nonstationary multiresolution analysis of $L_2(\RR)$ if 

\begin{center}
\medskip
\begin{tabular}{l}
({\it i}) $V^m \subset V^{m+1}$, $m \in \ZZ_+$;  \quad
({\it ii}) $\overline{\cup_{m \in \ZZ_+} V^m} = L_2(\RR)$; \quad
({\it iii}) $\bigcap_{m\in \ZZ_+} V^m = \{0\}$; \\ \\
({\it iv}) for any $m \in \ZZ_+$, there exists a $L_2(\RR)$-stable basis $\varphi^m$ in $V^m$
such that \\
$V^m = \overline{span} \ \left \{ \phi^m (\cdot - 2^{-m}\alpha), \alpha \in \ZZ \right \}\,.
$
\end{tabular}
\medskip
\end{center}

Property ({\it i}) implies that the {\em refinable functions} $\{\phi^m: m\in \ZZ_+\}$ 
satisfy  a set of {\em nonstationary refinable equations}, i.e.
\begin{equation} \label{NSRE}
\phi^m=\sum_{\alpha\in \ZZ}\ a_{\alpha}^{m}\ \phi^{m+1}(\cdot-2^{-(m+1)}\alpha),   \quad m\in \ZZ_+\,,  
\end{equation}
for some sequence of {\em scaling masks} $\{\ba^m: m \in \ZZ_+\}$, where $\ba^m=\{a_\alpha^m: \alpha \in \ZZ\}\in \ell_2(\ZZ)$
and 
\begin{equation}
\sum_{\alpha\in \ZZ} \ a_\alpha^m = 1\,.
\end{equation}
\\
Properties ({\it ii}) and ({\it iii}) are always true if any $\phi^m$ is compactly supported.
As for {\it (iv)}, the function system $\{\phi^m(\cdot-2^{-m}\alpha): \alpha \in \ZZ\}$, $m \in \ZZ_+$,
is $L_2(\RR)$-stable if and only if the Fourier transform of $\phi^m$ has no $2^{m+1} \pi$-periodic real zeros 
(cf. \cite{BDR93}).

The existence of a unique set of functions $\{\phi^m:m\in \ZZ_+\}$ solution to (\ref{NSRE}), as well as their properties, 
are related to the properties of the mask sequence $\{\ba^m:m \in \ZZ_+\}$.
Since we are interested in the the case of compactly supported masks, 
we assume that any mask $\ba^m$ is  compactly supported  with
\begin{equation}
{\rm supp} \, (\ba^m) \subseteq \Omega \subset \ZZ\,, \qquad m \in \ZZ_+\,,
\end{equation}
and there exists a {\em fundamental mask} $\ba=\{a_\alpha: \alpha \in \Omega\}$ 
satisfying the sum rules  
\begin{equation}
\sum_{\alpha\in \ZZ} \ a_{2\alpha+\gamma} =1\,, \qquad \gamma \in \ZZ\,,
\end{equation}
such that
\begin{equation} 
\sum_{m \in \ZZ_+} \, | a^m _\alpha - a_\alpha|<\infty\,, \qquad \alpha \in \Omega\,,
\end{equation}
(cf. \cite{GL99}).
Even if the assumption above excludes the mask sequence associated with the up function and the nonstationary mask sequence
considered in \cite{Ha10}, nevertheless it covers many nonstationary scaling masks, such us those ones associated with the exponential splines \cite{DL02}, \cite{VBU07} and the mask families associated with the ripplets introduced in 
\cite{CGP07}, \cite{GP08}.
For the case of mask sequences with growing support we refer the reader to \cite{CD96}, \cite{NPS11}.

The nonstationary refinable equations (\ref{NSRE}) can be associated 
to a  {\em nonstationary cascade algorithm}  \cite{GL99}, which    
generates at any iteration $k\in \ZZ_+$ the  sequence of functions $\{h^{m}_{k}: m\in \ZZ_+\}$ 
by 
\begin{equation} \label{NSCA}
h^{m}_{k+1}=\sum_{\alpha \in \Omega} \ a_{\alpha}^{m}\ 
h^{m+1}_{k}(\cdot -2^{-(m+1)}\alpha),  \qquad k\in \ZZ_+\,, \quad m\in \ZZ_+\,. 
\end{equation}
Without loss of generality, we assume that the starting function $h^{m}_{0}$ is the same for all $m\in \ZZ_+$, i.e. 
$h^{m}_{0}=h_0$, where $h_0$ is a given $L_2(\RR)$-stable function with ${\widehat h}_{0}(0)=1$,
so that $ {\widehat h}_{k}^m(0)=1$ for any $k,m \in \ZZ_+$.

The convergence of the cascade algorithm is related to the spectral properties of the {\em fundamental transition operator} $T:\ell_0(\ZZ) \to \ell_0(\ZZ)$, defined as
\begin{equation}
(T \ \blambda)_\alpha=  2 \, \sum_{\beta \in \ZZ} \ \check{a}_{2\alpha-\beta} \ \lambda_\beta\,,  \qquad \alpha \in \ZZ,
\end{equation}
where  
\begin{equation}
\check{\ba}=\{\check{a}_\alpha \in \ZZ\}\,, \qquad \check{a}_\alpha= \sum_{\beta \in \ZZ} \ a_\beta \ a_{\beta-\alpha}\,, \qquad 
 \alpha \in \ZZ\,, 
\end{equation}
is the autocorrelation of the fundamental mask $\ba$.
The cascade sequence $\{h^{m}_{k}: k\in \ZZ_+\}$ converges strongly to $\phi^m$ in $L_2(\RR)$ as $k \to \infty$,
uniformly in $m$, if and only if the fundamental transition operator has unit spectral radius, 1 is the unique eigenvalue on the unit circle and is simple \cite[Th. 1.3]{GL99}.
Under these hypotheses the sequence $\{\phi^m: m\in \ZZ_+\}$ converges strongly to the solution of the stationary refinable equation
\begin{equation}
\phi  = 2 \,\sum_{\alpha \in \Omega} \ a_\alpha \ \phi(2\cdot-\alpha)\,.
\end{equation}
We notice that any nonstationary mask sequence $\{\ba^m : m \in \ZZ_+\}$ having a B-spline mask as fundamental mask, 
gives rise to a convergent cascade algorithm \cite{DL02,GL99}.

\medskip

The nonstationary refinable equations  in the Fourier space read
\begin{equation} \label{NSRE_Fou}
\widehat \phi^m (\omega) = A^m\bigl (e^{-i \frac {\omega}{2^{m+1}}} \bigr ) \, \widehat \phi^{m+1} (\omega)\,, \qquad m \in \ZZ_+\,,
\end{equation}
where  the Laurent polynomials
\begin{equation}
A^m(z)=\sum_{\alpha \in \Omega} \, a^m_\alpha \, z^\alpha\,, \qquad m \in \ZZ_+\,, \quad z \in \CC\,,
\end{equation}
are the {\it mask symbols}.
We notice that any refinable function $\phi^m$ is normalized so that $\widehat \phi^m(0)=1$. 
In case the nonstationary cascade algorithm converges, the Fourier transform of $\phi^m$ is given by
\begin{equation} \label{infprod}
\widehat \phi^m (\omega) = \prod_{k=m}^\infty \,  A^k\bigl (e^{-i \frac {\omega}{2^{k+1}}} \bigr ) \,, \qquad \omega \in \RR\,.
\end{equation}

Given a nonstationary multiresolution analysis $\{V^m: m \in \ZZ_+\}$, at any level $m \in \ZZ_+$ 
the {\em wavelet space} 
$W^m$ is defined as the orthogonal complement of $V^m$ in $V^{m+1}$, i.e.
\begin{equation}
W^{m}=V^{m+1}\ominus V^m\,, \qquad m \in \ZZ_+\,.
\end{equation}
Any wavelet space is generated by the $2^{-m}$-shifts of a wavelet function $\psi^m$, i.e.
\begin{equation}
W^m = \overline{span} \, \left\{ \psi^m (\cdot - 2^{-m}\alpha), \alpha \in \ZZ \right \}\,,
\end{equation}
any $\psi^m$ being different at any scale $m$.
The existence of a set of generating wavelets $\{\psi^m, m \in \ZZ_+\}$ is always assured 
when all the refinable functions $\{\phi^m, m \in \ZZ_+\}$ are compactly supported.
Moreover, it is always possible to construct compactly supported
$L_2(\RR)$-stable wave-lets associated with compactly supported $L_2(\RR)$-stable refinable functions 
\cite[\S 4]{BDR93}.

\section{A Class of Bell-shaped Nonstationary Refinable Ripplets}

The class of univariate compactly supported nonstationary masks and refinable functions 
we are interested in was introduced in \cite{GP08}. Let us denoted the masks in the class by  
\begin{equation}
\{\ba^{(n,m)}: m \in \ZZ_+\}\,, \qquad \ba^{(n,m)}=\{a^{(n,m)}_0, \ldots,a^{(n,m)}_{n+1}\}\,, \quad m \in \ZZ_+\,,
\end{equation}
where $n$ is an integer $\ge 2$, related to the support of the mask $\ba^{(n,m)}$. 
\\
The explicit expression of $\ba^{(n,m)}$ is as follows.
For any $n$, the starting mask $\ba^{(n,0)}$ has entries
\begin{equation}  \left \{ \begin{array}{l}
a^{(n,0)}_0=a^{(n,0)}_1=\frac 1 2\,, \\ \\
a^{(n,0)}_\alpha=0\,, \qquad \hbox{otherwise}\,,
\end{array} \right.
\end{equation}
while the entries of the higher level masks $\ba^{(n,m)}$, $m \ge 1$, have expression
\begin{equation}  \left \{ \begin{array}{l}
a^{(n,m)}_\alpha =\displaystyle{\frac{1}{2^{n+1+m^{-\mu}}} \left [{\binom{n+1}{\alpha}}
+ 4(2^{m^{-\mu}}-1) {\binom{n-1}{\alpha-1}} \right ]}, \quad 0 \le\alpha \le n+1\,,\\ \\
a^{(n,m)}_\alpha = 0\,,  \qquad \hbox{otherwise}\,.
\end{array} \right.
\end{equation}
(We assume $\binom{n}{\alpha}=0$ when $\alpha <0$ or $\alpha >n$.)
\\
We notice that  $\mu>1$ is a real parameter that acts as a {\em tension parameter}. 
In fact, the larger $\mu$ the faster the factor $2^{m^{-\mu}}$ in the nonstationary mask coefficients
goes to 1 when $m\to \infty$. This means that the nonstationary process behaves in practice as a stationary one
if we choose large values of $\mu$. Thus, the more interesting cases are obtained for values of $\mu$ that 
are close to 1.
\\
For any $n$ and $\mu$, the mask $\ba^{(n,m)}$ is compactly supported on $[0,n+1]$ and is {\em bell-shaped}, 
i.e. its entries are positive, centrally symmetric 
and strictly increasing on $\bigl[0,\left[\frac{n+1}2\right]\bigr]$.
\\
We notice that for any $n \ge 2$ the 0-level scaling mask $\ba^{(n,0)}$
is the mask of the characteristic function of the interval [0,1], while the
$m$-level scaling masks $\ba^{(n,m)}$, $m >0$, are related to the class of stationary masks introduced in \cite{GP00}.
\\
Any mask sequence $\{\ba^{(n,m)}: m \in \ZZ_+\}$ can be associated with the set of nonstationary refinable equations
\begin{equation} \label{REphinm}
\phi^{(n,m)} = \sum_{\alpha\in \sigma^{(n,m)}_a} \, a^{(n,m)}_\alpha \, \phi^{(n,m+1)} (\cdot-2^{-(m+1)}\alpha)\,, 
\qquad m \in \ZZ_+\,,
\end{equation}
where
\begin{equation}
\sigma^{(n,m)}_a= {\rm supp} \ \bigl ( \ba^{(n,m)} \bigr) = \left \{ \begin{array}{ll}
[0,1], & m=0\,,     \\    
\left[ 0,n+1\right], &  m>0\,.
\end{array} \right.
\end{equation}
From \cite{GP08} it follows that for any $n\ge 2$ and $\mu>1$, $\phi^{(n,m)}$, $m \in \ZZ_+$, is  compactly supported with
\begin{equation} \label{suppphi}
supp \ \phi^{(n,m)}=\bigl [ 0, L_{(n,m)}\bigr]= \left \{ \begin{array}{ll}
\left [0,\frac n 2 +1 \right], & m=0\,,   \\ \\
\left [0, 2^{-m}(n+1) \right], & m>0\,, 
\end{array} \right.
\end{equation} 
and  belongs to $C^{n-1}(\RR)$. Moreover, any  system
\begin{equation}
\bPhi^{(n,m)}=\{ \phi^{(n,m)} (\cdot-2^{-m}\alpha): \alpha \in \ZZ\}\,, 
\end{equation} 
is linearly independent and $L_2(\RR)$-stable, forms a partition of unity,  
is {\em totally positive}, and enjoys the variation diminishing property.

\medskip
For any $n\ge 2$ and $\mu > 1$, the mask sequence $\{\ba^{(n,m)}: m \in \ZZ_+\}$ has  the sequence
\begin{equation}
\ba^{(n)}=\left \{ a_\alpha^{(n)}, 0\le \alpha \le n+1\right\}, \qquad a_\alpha^{(n)}=\frac 1 {2^{n+1}} {\binom{n+1}{\alpha}}\,,
\end{equation}
as fundamental mask \cite{GP08}. Since $\ba^{(n)}$ is the mask of the B-spline of degree $n$ having integer knots on $[0,n+1]$,
the sequence of functions $\{h^{(n,m)}_{k}: k \in \ZZ_+\}$, generated by the cascade algorithm
\begin{equation}  \label{cascalgnm}
h^{(n,m)}_{k+1}=\displaystyle \sum_{\alpha \in \sigma_a^{(n,m)}} \ a_\alpha^{(n,m)}\ 
h^{(n,m+1)}_{k}(\cdot -2^{-(m+1)}\alpha)\,,  \qquad m\in \ZZ_+\,,   
\end{equation}
converges strongly to $\phi^{(n,m)}\in L_2(\RR)$.
The convergence of the cascade algorithm implies that the Fourier transform of $\phi^{(n,m)}$ 
is 
\begin{equation} \label{infprodnm}
{\widehat \phi}^{(n,m)}(\omega)  = \prod_{k=m}^{\infty} \,A^{(n,k)}\left(e^{-i \frac{\omega}{2^{k+1}}}\right)\,, \qquad \omega \in \RR\,,
\end{equation}
where
\begin{equation}
A^{(n,m)}(z)=\sum_{\alpha\in \sigma_a^{(n,m)}} \ a^{(n,m)}_\alpha \ z^\alpha\,. 
\end{equation}
A straightforward computation gives
\begin{equation} \label{fattor_Anm}
  \begin{array}{lcl} 
A^{(n,0)}(z) &=&  \frac 12(1+z)= A^{(0)}(z)\,,  \\ \\
A^{(n,m)}(z) &=& \displaystyle{\frac{1}{2^{ n+1+m^{-\mu}} }  (1+z)^{n-1}
\bigl (z^2+2(2^{1+m^{-\mu}}-1)\ z+1\bigr )}= \\ \\
  &=& \frac{1+z}2  \, A^{(n-1,m)}(z) = A^{(0)}(z) \, A^{(n-1,m)}(z) \,,    \quad  m> 0\,.
\end{array} 
\end{equation}
It is worthwhile to observe that any symbol $A^{(n,m)}(z)$ is a Hurwitz polynomial \cite{GP00}, i.e. it has only zeros with negative real part.
Moreover, the fundamental symbol of $A^{(n,m)}(z)$ is the B-spline symbol
\begin{equation}
A^{(n)}(z)=\sum_{\alpha=0}^{n+1} \, a_\alpha^{(n)}\, z^\alpha = \frac 1 {2^{n+1}} \, (1+z)^{n+1}\,.
\end{equation}

\section{Properties of the nonstationary refinable ripplets $\phi^{(n,m)}$}

In this section we analyze some properties of the refinable ripplets $\phi^{(n,m)}$
that are useful in both geometric modeling and signal processing applications.

First of all, let us denote by $B^{(n,m)}$ the $2^m$-dilates of the B-spline of degree $n$ with knots on $2^{-m}\ZZ$, normalized so that
$\widehat B^{(n,m)}(0)=1$, $n \ge1$, $m \in \ZZ_+$. 
In the Fourier space $B^{(n,m)}$ satisfies the refinable equation 
\begin{equation}
{\widehat B}^{(n,m)}(\omega) = A^{(n)}\left(e^{-i \frac{\omega}{2^{m+1}}}\right)\, {\widehat B}^{(n,m+1)}(\omega)\,.
\end{equation}
Its Fourier transform is given by
\begin{equation} \label{Bn_Fou}
{\widehat B}^{(n,m)}(\omega)=\dprod{k=m}{\infty} \, A^{(n)}\left(e^{-i \frac{\omega}{2^{k+1}}}\right)=
\left ( \frac {1-e^{-i \frac{\omega}{2^{m}}  }}{i\omega 2^{-m}}\right ) ^{n+1}\,.
\end{equation}
The refinable functions  $\phi^{(n,m)}$ are generated by a convolution low involving the B-spline $B^{(n,m)}$. 

\medskip
\begin{theorem} \label{Th_conv}
For $n \ge 3$, $\phi^{(n,m)}$, $m\in \ZZ_+$, satisfies the {\em convolution} property
\begin{equation} \label{convm} \begin{array}{l}
\phi^{(n,0)} = B^{(0,1)}* \phi^{(n-1,0)}\,, \\ \\
\phi^{(n,m)} = B^{(0,m)}* \phi^{(n-1,m)}\,, \qquad m >0\,.
\end{array}
\end{equation}

\end{theorem}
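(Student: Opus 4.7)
The plan is to work entirely in the Fourier domain and exploit the factorization of the mask symbols given in \eqref{fattor_Anm}. Since \eqref{infprodnm} expresses $\widehat{\phi}^{(n,m)}$ as an infinite product of symbols $A^{(n,k)}$, and since for $k\ge 1$ each such symbol factors as $A^{(n,k)}(z) = A^{(0)}(z)\,A^{(n-1,k)}(z)$, the infinite product splits into two independent infinite products, one of which reconstructs $\widehat{B}^{(0,m)}$ via \eqref{Bn_Fou} (applied with $n=0$) and the other of which is exactly $\widehat{\phi}^{(n-1,m)}$. Taking the inverse Fourier transform then turns the pointwise product into the claimed convolution.

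More concretely, for $m>0$ I would write
\begin{equation*}
\widehat{\phi}^{(n,m)}(\omega) \;=\; \prod_{k=m}^{\infty} A^{(n,k)}\!\left(e^{-i\omega/2^{k+1}}\right)
\;=\; \prod_{k=m}^{\infty} A^{(0)}\!\left(e^{-i\omega/2^{k+1}}\right) \cdot \prod_{k=m}^{\infty} A^{(n-1,k)}\!\left(e^{-i\omega/2^{k+1}}\right),
\end{equation*}
where the split is legitimate because the infinite products converge absolutely (they already do so for the unfactored form, and each $A^{(0)}(e^{-i\omega/2^{k+1}}) \to 1$ geometrically fast in $k$). The first factor is $\widehat{B}^{(0,m)}(\omega)$ by \eqref{Bn_Fou} with $n=0$, and the second is $\widehat{\phi}^{(n-1,m)}(\omega)$ by \eqref{infprodnm}. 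Inverting the Fourier transform yields the second line of \eqref{convm}.

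For the $m=0$ case the factorization \eqref{fattor_Anm} at level zero gives only $A^{(n,0)}(z)=A^{(0)}(z)$, so I would peel off the $k=0$ term separately and absorb it into the $B$-spline factor. Explicitly,
\begin{equation*}
\widehat{\phi}^{(n,0)}(\omega) = A^{(0)}\!\left(e^{-i\omega/2}\right) \prod_{k=1}^{\infty} A^{(n,k)}\!\left(e^{-i\omega/2^{k+1}}\right),
\end{equation*}
and applying the level-$k\ge 1$ factorization to the product on the right, one obtains $\widehat{\phi}^{(n,0)}(\omega)=\widehat{B}^{(0,1)}(\omega)\,\widehat{\phi}^{(n-1,0)}(\omega)$ after reassembling $A^{(n-1,0)}(e^{-i\omega/2})=A^{(0)}(e^{-i\omega/2})$ into the $\widehat{\phi}^{(n-1,0)}$ product. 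Taking inverse Fourier transforms gives the first line of \eqref{convm}.

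The main subtlety is bookkeeping at the coarsest level, since the recursion $A^{(n,k)}=A^{(0)}A^{(n-1,k)}$ fails at $k=0$; the correct identification of the $B$-spline factor at $m=0$ is $\widehat{B}^{(0,1)}$ rather than $\widehat{B}^{(0,0)}$, and one must track the index shift carefully. Apart from this, nothing is needed beyond absolute convergence of the defining infinite products (guaranteed by the convergence of the cascade algorithm recalled before \eqref{infprodnm}) and standard translation of pointwise Fourier-product identities into convolution identities for compactly supported $L_2(\RR)$ functions.
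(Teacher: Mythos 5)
Your proof is correct and follows essentially the same route as the paper: both work in the Fourier domain, split the infinite product \eqref{infprodnm} using the factorization $A^{(n,k)}=A^{(0)}A^{(n-1,k)}$ from \eqref{fattor_Anm}, identify the two resulting products as $\widehat{B}^{(0,m)}$ and $\widehat{\phi}^{(n-1,m)}$, and invert. Your explicit bookkeeping at $k=0$ (reabsorbing $A^{(n-1,0)}=A^{(0)}$ into the $\widehat{\phi}^{(n-1,0)}$ product to obtain the $\widehat{B}^{(0,1)}$ factor) is a detail the paper states without elaboration, but the argument is the same.
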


\begin{proof}
From (\ref{infprodnm}), (\ref{fattor_Anm}) and (\ref{Bn_Fou}) it follows
$$\begin{array}{lcl}
{\widehat \phi}^{(n,m)}(\omega)  &=& \displaystyle  \dprod{k=m}{\infty} \, A^{(0)}\left ( e^{-i \frac{\omega}{2^{k+1}}}  \right ) \
\dprod{k=m}{\infty} \, A^{(n-1,k)}(e^{-i \frac{\omega}{2^{k+1}}})= \\ \\
& = & {\widehat B}^{(0,m)} (\omega)\,{\widehat \phi}^{(n-1,m)}(\omega)\,
\end{array}$$
for $m >0$, and 
$
{\widehat \phi}^{(n,0)}(\omega) = {\widehat B}^{(0,1)} (\omega)\,{\widehat \phi}^{(n-1,0)}(\omega)\,
$,
for $m=0$. 
The claim follows by applying the inverse Fourier transform to the relations above.
\end{proof} 

\medskip
\noindent
As a first consequence of the theorem above, we can prove that $\phi^{(n,m)}$ satisfies suitable Strang-Fix conditions.

\medskip
\begin{corollary} \label{CorSF}
For any $n\ge2$ and $m >0$, ${\widehat \varphi}^{(n,m)}$ has a zero of order $n-1$ for $\omega=2^{m+1}\pi \alpha$,
$\alpha \in \ZZ\backslash \{0\}$. ${\widehat \varphi}^{(n,0)}$ has a simple zero for $\omega=2\pi \alpha$,
$\alpha \in \ZZ\backslash \{0\}$.
\end{corollary}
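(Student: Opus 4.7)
The plan is to leverage the convolution identity of Theorem~\ref{Th_conv} together with the mask factorization (\ref{fattor_Anm}) to extract a power of $\widehat{B}^{(0,m)}$ as a factor of $\widehat{\phi}^{(n,m)}$, and then to read off the zero order from the explicit form of $\widehat{B}^{(0,m)}$ in (\ref{Bn_Fou}).

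For $m>0$, I would iterate the relation in (\ref{convm}) downward in $n$, which is admissible as long as the subscript stays at least $3$, to obtain $\phi^{(n,m)} = (B^{(0,m)})^{*(n-2)} * \phi^{(2,m)}$ for every $n\ge 2$. Substituting into the infinite product (\ref{infprodnm}) the factorization $A^{(2,k)}(z) = A^{(0)}(z)\,A^{(1,k)}(z)$ read off from (\ref{fattor_Anm}) supplies one further $\widehat{B}^{(0,m)}$-factor, yielding
\begin{equation*}
\widehat{\phi}^{(n,m)}(\omega) \;=\; \bigl(\widehat{B}^{(0,m)}(\omega)\bigr)^{n-1}\,\widehat{\phi}^{(1,m)}(\omega),
\end{equation*}
where $\widehat{\phi}^{(1,m)}(\omega):=\prod_{k\ge m} A^{(1,k)}(e^{-i\omega/2^{k+1}})$ is a well-defined infinite product. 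The explicit formula (\ref{Bn_Fou}) shows $\widehat{B}^{(0,m)}(\omega)=(1-e^{-i\omega/2^m})/(i\omega/2^m)$, whose only zeros are simple and occur precisely at $\omega=2^{m+1}\pi\alpha$ with $\alpha\ne 0$; raising to the $(n-1)$-th power therefore contributes the required zero of order $n-1$ at each such frequency.

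For $m=0$, the refinement relation (\ref{NSRE_Fou}) at level $0$, together with $A^{(n,0)}(z)=A^{(0)}(z)=(1+z)/2$, gives $\widehat{\phi}^{(n,0)}(\omega) = \tfrac{1+e^{-i\omega/2}}{2}\,\widehat{\phi}^{(n,1)}(\omega)$. When $\omega=2\pi\alpha$ with $\alpha$ odd, the first factor vanishes simply; when $\omega=2\pi\alpha$ with $\alpha$ a nonzero even integer, one has $\omega=2^{2}\pi(\alpha/2)$, so $\widehat{\phi}^{(n,1)}$ vanishes by the $m>0$ case just proven. In either situation $\widehat{\phi}^{(n,0)}(2\pi\alpha)=0$ for $\alpha\ne 0$, which is the asserted simple zero.

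The delicate point, and the main obstacle, is to show that the residual factor $\widehat{\phi}^{(1,m)}$ does not vanish at $\omega = 2^{m+1}\pi\alpha$, so that the zero order in the displayed identity above is sharp and not merely a lower bound. This reduces to the Hurwitz-polynomial character of each $A^{(1,k)}$ recorded just after (\ref{fattor_Anm}): the two reciprocal roots of $A^{(1,k)}$ lie on the negative real axis and hence strictly off the unit circle, so every factor of the infinite product is nonzero for $z=e^{-i\omega/2^{k+1}}$; a standard geometric-convergence estimate, using $A^{(1,k)}(1)=1$ and the fact that $A^{(1,k)}(e^{-i\omega/2^{k+1}})\to 1$ as $k\to\infty$, then secures a nonzero limit.
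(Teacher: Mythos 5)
Your argument is essentially the paper's own proof: your factorization $\widehat{\phi}^{(n,m)}=\bigl(\widehat{B}^{(0,m)}\bigr)^{n-1}\widehat{\phi}^{(1,m)}$ coincides, via (\ref{Bn_Fou}), with the paper's $\widehat{\phi}^{(n,m)}=\widehat{B}^{(n-2,m)}F^{(m)}$, and the nonvanishing of the residual factor is established the same way (no zeros of $A^{(1,k)}$ on the unit circle plus convergence of the infinite product), with your sketch of why the limit itself is nonzero being, if anything, slightly more explicit than the paper's. The $m=0$ case is likewise handled as in the paper, and you inherit the same harmless imprecision the paper has there (at even nonzero $\alpha$ the zero actually has order $n-1$, so ``simple'' should be read as ``at least simple'').
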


\begin{proof}
By repeated application of Th.~\ref{Th_conv} we get
\begin{equation} \label{conv_TFphi}
\begin{array}{l}
{\widehat \phi}^{(n,m)}(\omega)={\widehat B}^{(n-2,m)} (\omega)\,F^{(m)}(\omega)\,, \\ \\
{\widehat \phi}^{(n,0)}(\omega) = {\widehat B}^{(n-2,1)} (\omega)\,F^{(0)}(\omega)\,,
\end{array}
\end{equation}
where 
$$
F^{(m)}(\omega)  = \dprod{k=m}{\infty} \, A^{(1,k)}(e^{-i \frac{\omega}{2^{k+1}}})\,.
$$
Since the symbols $A^{(1,m)}(z)$ have the hat function symbol $A^{(1)}(z)$ as fundamental symbol, 
the infinite product converges \cite{GL99}. Moreover, for $m >0$ the symbol $A^{(1,m)}(z)$ has no zeros on the unit circle, 
so that $F^{(m)}(\omega)$ has no zeros, too. Thus, ${\widehat \phi}^{(n,m)}(\omega)$, $m>0$, has zeros of the same order 
as ${\widehat B}^{(n-2,m)}$ does, and the claim follows.
\\
For $m=0$, 
$${\widehat \phi}^{(n,0)}(\omega) ={\widehat B}^{(n-2,1)}(\omega)\,A^{(0)}(e^{-i \frac{\omega}{2}}) \, F^{(1)}(\omega)\,. $$$A^{(1,0)}(e^{-i \frac{\omega}{2}})$ has a simple zero for $\omega=2(2\alpha+1)\pi$, $\alpha \in \ZZ$,
while ${\widehat B}^{(n-2,1)}$ has a zero of order $n-1$ for
$\omega=4\alpha\pi$, $\alpha \in \ZZ\backslash\{0\}$. Thus, ${\widehat \phi}^{(n,0)}(\omega)$
has a simple zero for $\omega=2\alpha\pi$, $\alpha \in \ZZ\backslash\{0\}$.
\end{proof}
\medskip

The Strang-Fix conditions allow us to conclude that polynomials of suitable degree are contained in  
the space generated by the $2^{-m}$-translates of $\phi^{(n,m)}$, i.e. the space
\begin{equation}
V^{(n,m)}= \overline{span} \left \{\phi^{(n,m)} (\cdot-2^{-m}\alpha): \alpha \in \ZZ \right\}\,, \qquad m \in \ZZ_+\,. 
\end{equation}

\begin{theorem} \label{th_poly} Let $\Pi_d$ be the space of polynomials up to degree $d$. For any $n\ge 2$ and $m>0$, the space $V^{(n,m)}$  contains $\Pi_{n-2}$,
i.e. for any polynomial $p \in \Pi_{n-2}$ there exists a sequence of real numbers $\{\gamma_\alpha^{(n,m)}: \alpha \in \ZZ\}$ such that
\begin{equation}
p=\sum_{\alpha \in \ZZ} \ \gamma_\alpha^{(n,m)} \ \phi^{(n,m)}(\cdot-2^{-m}\alpha)\,.
\end{equation}
The space $V^{(n,0)}$ contains  the space of the constants $\Pi_0$.
\end{theorem}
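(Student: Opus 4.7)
The plan is to deduce polynomial reproduction from the Strang--Fix-type vanishing conditions established in Corollary~\ref{CorSF} by invoking the classical Strang--Fix theorem, suitably rescaled to account for the $2^{-m}$ spacing of the translates rather than unit spacing.

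First I would reduce to the integer-shift situation. Setting $h = 2^{-m}$ and $\tilde\phi(y) := \phi^{(n,m)}(h y)$, the substitution $x = h y$ gives $\phi^{(n,m)}(x - h\alpha) = \tilde\phi(y-\alpha)$, and a polynomial $p(x)$ of degree $d$ is sent to the polynomial $p(h y)$ of the same degree in $y$. Hence the reproduction of $\Pi_{n-2}$ by the $2^{-m}$-shifts of $\phi^{(n,m)}$ is equivalent to the reproduction of $\Pi_{n-2}$ by the integer shifts of $\tilde\phi$, and the coefficients $\gamma_\alpha^{(n,m)}$ in the two formulations coincide.

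Then I would apply the classical Strang--Fix theorem: for a compactly supported function $\psi$ with $\widehat\psi(0)\neq 0$, the integer translates of $\psi$ reproduce $\Pi_d$ if and only if $\widehat\psi$ vanishes to order $d+1$ at every $\omega = 2\pi\alpha$ with $\alpha \in \ZZ\setminus\{0\}$. From the dilation rule $\widehat{\tilde\phi}(\xi) = h^{-1}\widehat\phi^{(n,m)}(\xi/h)$, the zeros of $\widehat\phi^{(n,m)}$ at $\omega = 2^{m+1}\pi\alpha$ correspond exactly to zeros of $\widehat{\tilde\phi}$ at $\xi = 2\pi\alpha$ of the same order. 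By Corollary~\ref{CorSF} this order is $n-1$ when $m>0$, yielding reproduction of $\Pi_{n-2}$, and it is $1$ when $m=0$, yielding reproduction of $\Pi_0$.

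There is no substantive obstacle: the normalization $\widehat\phi^{(n,m)}(0)=1$, the compact support of $\phi^{(n,m)}$, and the $L_2(\RR)$-stability of $\bPhi^{(n,m)}$ were recorded in Section~3, so the hypotheses of Strang--Fix are in force. The only care needed is the bookkeeping of the dilation $h = 2^{-m}$ in matching the zero set $2^{m+1}\pi\,\ZZ\setminus\{0\}$ furnished by Corollary~\ref{CorSF} with the lattice $2\pi\,\ZZ\setminus\{0\}$ required by the classical statement; once this is done, the conclusion is immediate.
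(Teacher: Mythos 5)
Your proof is correct and follows exactly the route the paper intends: the paper states Theorem~\ref{th_poly} without a written proof, introducing it with the remark that the Strang--Fix conditions of Corollary~\ref{CorSF} yield polynomial containment, which is precisely the classical Strang--Fix argument you carry out. Your rescaling to integer shifts and the matching of the zero lattice $2^{m+1}\pi\,\ZZ\setminus\{0\}$ with $2\pi\,\ZZ\setminus\{0\}$ correctly supply the bookkeeping the paper leaves implicit.
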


\medskip
\noindent
{\em Note}. The roots of the nonstationary symbols (\ref{fattor_Anm}) cannot
fulfill the hypotheses of Th. 1 in \cite{VBU07}. Thus, the refinable functions $\phi^{(n,m)}$ cannot 
generate exponential polynomials. Actually, we are interest in the construction of efficient decomposition 
and reconstruction formulas for general (possibly non exponential) signals. To this end high algebraic polynomial 
generation is more effective since it induces a high number of vanishing moments in the analyzing wavelet.
\medskip

\noindent
From the results above it follows the approximation order of the system $\bPhi^{(n,m)}$  \cite{Ji97}.  

\medskip
\begin{corollary}
For any $n\ge 2$  and $m>0$, the system $\bPhi^{(n,m)}$ has {\em approximation order} $n-1$, i.e. for any $f \in L_2(\RR)$ there exists a constant $C_f$, independent from $m$, such that
\begin{equation}
\inf_{f_m \in V^{(n,m)}} \|f-f_m\|_2 \le C_f \,  2^{-m(n-1)}\,.  
\end{equation}
The system $\bPhi^{(n,0)}$ has approximation order 1.
\end{corollary}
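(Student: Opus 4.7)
The plan is to derive the approximation order from the polynomial containment of Theorem~\ref{th_poly} by invoking the standard Jia-type characterization \cite{Ji97}: for a compactly supported, $L_2$-stable function whose integer shifts reproduce $\Pi_d$, the associated shift-invariant space provides $L_2$-approximation order $d+1$; after rescaling to step size $h$, the error estimate becomes $O(h^{d+1})$. Applied with $d = n-2$ and $h = 2^{-m}$ at each level $m > 0$, this yields the target rate $2^{-m(n-1)}$. The case $m = 0$ is handled analogously with $d = 0$ and $h = 1$, using only the partition-of-unity content $\Pi_0 \subset V^{(n,0)}$.

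Concretely, I would first reduce to integer translates by setting
\begin{equation*}
\widetilde \phi^{(n,m)}(y) := \phi^{(n,m)}(2^{-m} y),
\end{equation*}
so that, under the change of variable $y = 2^m x$, the integer shifts of $\widetilde \phi^{(n,m)}$ generate a space isometrically isomorphic (up to a $2^{m/2}$ normalization) to $V^{(n,m)}$. By Theorem~\ref{th_poly} these integer shifts reproduce $\Pi_{n-2}$; combined with the $L_2$-stability of $\bPhi^{(n,m)}$ recalled in Section~3, this lets one construct a Jia--de Boor quasi-interpolant
\begin{equation*}
Q^{(n,m)} f = \sum_{\alpha \in \ZZ} \lambda_\alpha^{(n,m)}(f)\, \phi^{(n,m)}(\cdot - 2^{-m}\alpha),
\end{equation*}
bounded from $L_2(\RR)$ into $V^{(n,m)}$ and exact on $\Pi_{n-2}$. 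A local Bramble--Hilbert / Whitney estimate at scale $2^{-m}$ then gives $\|f - Q^{(n,m)} f\|_2 \le C^{(m)} 2^{-m(n-1)} |f|_{H^{n-1}}$ for smooth $f$, and a standard density / Jackson-type argument extends this to the bound stated in the corollary, with an $f$-dependent constant, for all $f \in L_2(\RR)$.

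The main obstacle will be showing that $C^{(m)}$ can be chosen independent of $m$, without which $C_f$ would still depend on the level. This reduces to establishing (i) a uniform $L_\infty$-bound on $\widetilde \phi^{(n,m)}$ and (ii) uniform lower and upper Riesz bounds for the integer shifts of $\widetilde \phi^{(n,m)}$. Both uniformities are expected to follow from the convergence of the mask symbols $A^{(n,m)}(z) \to A^{(n)}(z)$: the deviations $|a_\alpha^{(n,m)} - a_\alpha^{(n)}|$ are controlled by $2^{m^{-\mu}}-1$, which is summable in $m$ for $\mu>1$, so the infinite products (\ref{infprodnm}) converge uniformly in $m$ on compact frequency intervals. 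A Wiener-algebra argument, together with the absence of $2\pi$-periodic real zeros inherited from the B-spline limit $A^{(n)}(z)$, then provides the uniform Riesz bounds and closes the proof.
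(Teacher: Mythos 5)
Your proposal follows the same route as the paper, which states this corollary without proof as a direct consequence of Theorem~\ref{th_poly} and the standard characterization of $L_2$-approximation order for stable, compactly supported generators in \cite{Ji97}; your quasi-interpolant/Bramble--Hilbert machinery is simply the standard proof of that cited result written out. The one place where you add genuine value is the uniformity in $m$: since the generators $\phi^{(n,m)}$ are different functions at each level (not dilates of one another), Jia's single-generator theorem does not by itself give a constant independent of $m$, and your argument --- uniform Riesz bounds and a uniform $L_\infty$ bound for the rescaled generators, deduced from the summable convergence $\ba^{(n,m)}\to\ba^{(n)}$ and the positivity of the limiting B-spline autocorrelation symbol --- is exactly what is needed to close that gap, which the paper passes over in silence. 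One caveat: the final ``density / Jackson-type'' step cannot yield the stated rate for \emph{arbitrary} $f\in L_2(\RR)$; a decay like $2^{-m(n-1)}$ requires $f$ in the Sobolev class $W_2^{n-1}$ (this is the standard meaning of approximation order and is how \cite{Ji97} states it), so that sentence of yours overreaches in the same way the corollary's wording does, and should be read as applying to sufficiently smooth $f$ with $C_f$ proportional to a Sobolev seminorm of $f$.
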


\medskip
\noindent
{\em Note}. Even if the system $\bPhi^{(n,0)}$ reproduces just the constants, polynomials of higher degree can be represented by suitable 
integer translates of $\phi^{(n,0)}(2^{-1}\cdot)$. In fact, $\phi^{(n,0)}(2^{-1}\cdot)\in C^{n-1}(\RR)$ and its Fourier transform 
has a zero of order $n-1$ for $\omega=2\alpha\pi$, $\alpha \in \ZZ\backslash\{0\}$. Thus, polynomials
of degree $n-2$ can be represented by the integer shifts of the dilate $\phi^{(n,0)}(2^{-1}\cdot)$ and the system $\{\phi^{(n,0)}(2^{-1}\cdot-\alpha): \alpha \in \ZZ\}$ has approximation order $n-1$ (cf. \cite{DM93}). 
\bigskip

\noindent
Interestingly enough, the convolution property (\ref{convm}) allows us to prove a {\it differentiation rule} 
for the functions  $\phi^{(n,m)}$.

\medskip
\begin{theorem}  \label{derphinm}
Let $\nabla_h^r$ be the backward finite difference operator defined recursively as
$$
\nabla_h f = \frac 1 h \bigl (f - f(\cdot-h) \bigr)\,, \qquad \nabla_h^0 f =  f\,, \quad 
\nabla_h ^r f = \nabla_h ( \nabla_h^{r-1}) f\,, \quad r\ge 1\,.
$$
For $n\ge2$, the derivatives of $\phi^{(n,m)}$ are given by
\begin{equation} \label{dervm} 
\begin{array}{lll}
D^r \, \phi^{(n,0)}=  \nabla_{2^{-1}}^r \, \phi^{(n-r,0)}, &\quad  r\le n-1\,, \\ \\
D^r \, \phi^{(n,m)}=  \nabla_{2^{-m}}^r \, \phi^{(n-r,m)},&\quad  r\le n-1\,, &\quad m>0\,.
\end{array}
\end{equation}
\end{theorem}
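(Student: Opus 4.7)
The plan is to iterate the convolution identity of Theorem \ref{Th_conv} and recognize that differentiation against $B^{(0,m)}$ produces a backward finite difference of step $2^{-m}$. The key elementary fact is that, because $\widehat B^{(0,m)}(\omega) = (1-e^{-i\omega/2^m})/(i\omega 2^{-m})$ by (\ref{Bn_Fou}), the function $B^{(0,m)}$ coincides with $2^m \chi_{[0,2^{-m}]}$. Hence for any sufficiently regular $f$,
$$ D\bigl(B^{(0,m)} * f\bigr)(x) = 2^m\bigl(f(x) - f(x - 2^{-m})\bigr) = \nabla_{2^{-m}} f(x), $$
and analogously $D(B^{(0,1)} * f) = \nabla_{2^{-1}} f$. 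This reduces the statement to a recursion on the first argument of $\phi^{(\cdot,m)}$.

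I would then proceed by induction on $r$. For $r = 1$, Theorem \ref{Th_conv} gives $\phi^{(n,m)} = B^{(0,m)} * \phi^{(n-1,m)}$ (resp.\ $\phi^{(n,0)} = B^{(0,1)} * \phi^{(n-1,0)}$), so the elementary identity above immediately yields the base case. For the inductive step, assuming $D^r\phi^{(n,m)} = \nabla_{2^{-m}}^r \phi^{(n-r,m)}$, I would commute $D$ past the linear translation-valued operator $\nabla_{2^{-m}}^r$ and apply the $r=1$ case at the lower index $n-r$:
$$ D^{r+1}\phi^{(n,m)} \;=\; \nabla_{2^{-m}}^r\, D\phi^{(n-r,m)} \;=\; \nabla_{2^{-m}}^r\, \nabla_{2^{-m}}\, \phi^{(n-r-1,m)} \;=\; \nabla_{2^{-m}}^{r+1}\phi^{(n-r-1,m)}. $$
The $m=0$ case is identical with $\nabla_{2^{-1}}$ throughout, since Theorem \ref{Th_conv} supplies $B^{(0,1)}$ at every level-$0$ iteration.

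The main technical point to monitor is the range of $r$: each inductive application invokes the convolution property at index $n-k$, so we must read the factorization $A^{(n',m)}(z) = A^{(0)}(z)A^{(n'-1,m)}(z)$ in (\ref{fattor_Anm}) as valid down to $n' = 2$ (extending the class index to $1$ if needed, where the factorization still makes sense). The upper bound $r \le n-1$ in the statement matches the classical regularity $\phi^{(n,m)} \in C^{n-1}(\RR)$, so that all derivatives we differentiate really exist pointwise. An alternative, essentially equivalent route is to pass to Fourier space and exploit $(i\omega)\,\widehat B^{(0,m)}(\omega) = 2^m(1-e^{-i\omega/2^m})$ directly on the iterated factorization of $\widehat \phi^{(n,m)}$; this avoids any regularity bookkeeping but obscures the structural meaning of $\nabla_{2^{-m}}$. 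I expect the only nontrivial step is the bookkeeping above — the rest is a clean induction.
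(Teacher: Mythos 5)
Your proposal is correct and follows essentially the same route as the paper: both reduce the claim to the $r=1$ case by peeling off one $B^{(0,m)}$ (resp.\ $B^{(0,1)}$) convolution factor from Theorem \ref{Th_conv} and then iterate, the only difference being that the paper executes the single differentiation step in the Fourier domain (writing $i\omega\,\widehat B^{(n-2,m)}(\omega)=2^m(1-e^{-i2^{-m}\omega})\,\widehat B^{(n-3,m)}(\omega)$) whereas you work in the time domain via $B^{(0,m)}=2^m\chi_{[0,2^{-m}]}$ and the fundamental theorem of calculus --- a variant you yourself flag as equivalent. Your bookkeeping on the admissible range of $r$ (extending the factorization down to index $1$) matches what the paper does implicitly through the factor $F^{(m)}=\prod_{k\ge m}A^{(1,k)}$.
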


\begin{proof}
We will prove (\ref{dervm}) just for $m >0$; the case $m=0$ can be proved in a similar way. 
\\
From (\ref{Bn_Fou}) and the first of (\ref{conv_TFphi}) it follows
$$  \begin{array}{lcl} { \widehat {\left ( D  \ \phi^{(n,m)} \right ) } }(\omega) &=& i\omega  \ {\widehat \phi^{(n,m)}}(\omega)=
i\omega  \ {\widehat B}^{(n-2,m)} (\omega) \,F^{(m)}(\omega) = \\ \\
&=& \left ( \frac{1-e^{-i2^{-m}\omega }}{2^{-m}}\right ) \ {\widehat B}^{(n-3,m)}(\omega) \ F^{(m)}(\omega) = \\ \\
&=& \left ( \frac{1-e^{-i2^{-m}\omega }}{2^{-m}}\right ) \ {\widehat \phi^{(n-1,m)}} (\omega)\,.
\end{array}$$
The inverse Fourier transform gives
$$
D \, \phi^{(n,m)}=  \nabla_{2^{-m}} \, \phi^{(n-1,m)},
$$
which is the derivation rule for $r=1$. Repeated applications of this rule give the derivation rules of higher order.
\end{proof}
\medskip

From (\ref{dervm}) and the variation diminishing property, we can infer that the derivative 
$D^r \, \varphi^{(n,m)}$, $m\in \ZZ_+\,$, has the same behavior of the finite difference 
\begin{equation} \label{derdiffvm}
\nabla_{h}^r \, \phi^{(n-r,m)} = \displaystyle \frac 1 {h^{r}} \, \sum_{\alpha=0}^r \, (-1)^{\alpha} \, \binom{r}{\alpha} \, 
\phi^{(n-r,m)}(\cdot -h\, \alpha)\,, \qquad m\in \ZZ_+\,,
\end{equation}
with $h=2^{-1}$ for $m=0$ and $h=2^{-m}$ for $m>0$. In particular, 
the number of strict sign changes of $D^r \, \varphi^{(n,m)}$ is not greater than the number of strict sign changes of the sequence 
$$\bc^r=\{c^r_\alpha: 0 \le \alpha \le r\}=\bigl \{ (-1)^{\alpha} \, \binom{r}{\alpha}: 0 \le \alpha \le r\}\,,$$ so that we can infer the shape of $\phi^{(n,m)}$ from $S^-\left(\bc^r\right)$.

\medskip
\begin{theorem}  
For any $n>2$ and $m\in \ZZ_+$, $\phi^{(n,m)}$ is {\em bell-shaped}, i.e. $\phi^{(n,m)}$ is centrally symmetric, strictly increasing on $[0,|supp \ \phi^{(n,m)}|/2]$, 
and its second derivative has just 2 sign changes.
\end{theorem}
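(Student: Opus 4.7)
The plan is to establish bell-shapedness by treating its three defining pieces—central symmetry, strict monotonicity on $[0,L_{(n,m)}/2]$ with $L_{(n,m)}:=|\mathrm{supp}\,\phi^{(n,m)}|$, and exactly two sign changes of $D^2\phi^{(n,m)}$—one at a time. Central symmetry will come from the symmetry of the masks via the infinite product (\ref{infprodnm}); the other two properties will follow by applying the derivation rule of Theorem~\ref{derphinm} to express $D^r\phi^{(n,m)}$ as a short combination of $h$-shifts of the lower-order ripplet $\phi^{(n-r,m)}$ (with $h=2^{-m}$ for $m>0$ and $h=2^{-1}$ for $m=0$), then invoking the variation-diminishing inequality (\ref{var.dim.1}) for the totally positive system $\bPhi^{(n-r,m)}$ and sharpening the resulting upper bound by using central symmetry.

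\textbf{Central symmetry.} Direct inspection of the explicit mask formula shows that each $\ba^{(n,m)}$ is centrally symmetric (about $1/2$ for $m=0$ and about $(n+1)/2$ for $m>0$), so the symbol satisfies $A^{(n,m)}(z)=z^{\sigma_m}A^{(n,m)}(1/z)$ with $\sigma_0=1$ and $\sigma_m=n+1$ for $m>0$. Substituting this identity into (\ref{infprodnm}) and summing the resulting geometric series of phase factors, one shows that $\widehat\phi^{(n,m)}(\omega)\,e^{i\omega L_{(n,m)}/2}$ is real and even in $\omega$, so Fourier inversion gives $\phi^{(n,m)}(L_{(n,m)}-x)=\phi^{(n,m)}(x)$.

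\textbf{Monotonicity and second derivative.} Theorem~\ref{derphinm} with $r=1$ gives
\begin{equation*}
D\phi^{(n,m)}(x)=h^{-1}\bigl(\phi^{(n-1,m)}(x)-\phi^{(n-1,m)}(x-h)\bigr),
\end{equation*}
a combination of two $h$-shifts of $\phi^{(n-1,m)}$ with coefficient sequence $(1,-1)$ having $S^{-}=1$. Variation-diminishing on $\bPhi^{(n-1,m)}$ gives $S^{-}(D\phi^{(n,m)})\le 1$; central symmetry makes $D\phi^{(n,m)}$ antisymmetric about $L_{(n,m)}/2$, so the single admissible strict sign change lies at the midpoint and $D\phi^{(n,m)}\ge 0$ on $[0,L_{(n,m)}/2]$. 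Positivity near $0$ is immediate since $D\phi^{(n,m)}(x)=h^{-1}\phi^{(n-1,m)}(x)>0$ on $(0,h]$; strict positivity on the whole open left half is obtained by ruling out interior plateaus, as discussed below. Applying Theorem~\ref{derphinm} with $r=2$,
\begin{equation*}
D^2\phi^{(n,m)}=h^{-2}\sum_{\alpha=0}^{2}(-1)^\alpha\binom{2}{\alpha}\,\phi^{(n-2,m)}(\cdot-h\alpha),
\end{equation*}
whose coefficient sequence $(1,-2,1)$ has $S^{-}=2$, so variation-diminishing on $\bPhi^{(n-2,m)}$ yields $S^{-}(D^2\phi^{(n,m)})\le 2$. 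Central symmetry of $\phi^{(n,m)}$ carries over to $D^2\phi^{(n,m)}$, forcing its number of strict sign changes to be even; and $D^2\phi^{(n,m)}$ is not identically zero (otherwise $\phi^{(n,m)}$ would be affine and hence zero by compact support) while $\int_{\RR}D^2\phi^{(n,m)}=0$ (as the derivative of the compactly supported $C^0$ function $D\phi^{(n,m)}$), so $D^2\phi^{(n,m)}$ takes both signs and changes sign at least twice. Combined with the upper bound this gives exactly $2$ sign changes.

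\textbf{Main obstacle.} The most delicate step is upgrading $D\phi^{(n,m)}\ge 0$ to strict positivity on $(0,L_{(n,m)}/2)$: the bound $S^{-}(D\phi^{(n,m)})\le 1$ controls only \emph{strict} sign changes and does not by itself preclude flat plateaus in the interior. Excluding them requires a rigidity argument—any such plateau would force $\phi^{(n-1,m)}$ to coincide with its $h$-translate on a subinterval inside the support of $\phi^{(n-1,m)}$, which, combined with the near-boundary positivity just established and the compact support of $\phi^{(n-1,m)}$, is incompatible with the antisymmetric sign pattern of $D\phi^{(n,m)}$. A secondary point is the edge case $n=3$, where one needs variation-diminishing for $\bPhi^{(1,m)}$, which is not directly part of the Section~3 quotation (stated for $n\ge 2$); a short direct argument from the Hurwitz factorization of $A^{(1,m)}(z)$ with strictly positive coefficients supplies what is missing.
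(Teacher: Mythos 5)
Your overall strategy coincides with the paper's: express $D^r\phi^{(n,m)}$ as a short finite difference of a lower-order ripplet via Theorem~\ref{derphinm}, bound the number of strict sign changes by variation diminishing, and locate them by central symmetry. Within that skeleton you make three substitutions. (i) You derive central symmetry from the infinite product (\ref{infprodnm}) and the reflection identity $A^{(n,m)}(z)=z^{\sigma_m}A^{(n,m)}(1/z)$; the paper gets it in one line from the central symmetry of the masks. Your computation is correct (the accumulated phase $\sum_{k\ge m}\sigma_k 2^{-(k+2)}$ does equal $L_{(n,m)}/2$ in both cases) but buys nothing extra. (ii) For the lower bound of two sign changes of $D^2\phi^{(n,m)}$ you combine $\int_\RR D^2\phi^{(n,m)}=0$ with the parity of the number of strict sign changes of a centrally symmetric function; the paper instead checks pointwise that $D^2\phi^{(n,m)}$ is positive on $(0,h)$ and negative at the midpoint. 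Both are valid; yours avoids the midpoint evaluation. (iii) You write $D^2\phi^{(n,m)}$ as a second difference of $\phi^{(n-2,m)}$, which is what Theorem~\ref{derphinm} actually gives; the paper's display (\ref{der12m}) keeps $\phi^{(n-1,m)}$ there. Your version is the literal one and forces the $n=3$ edge case involving $\bPhi^{(1,m)}$ that you correctly flag.

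Two points need attention. First, for $m=0$ you invoke variation diminishing for $\bPhi^{(n-r,0)}$, but that system consists of the \emph{integer} shifts of $\phi^{(n-r,0)}$, whereas $\nabla_{2^{-1}}^r\phi^{(n-r,0)}$ is a combination of half-integer shifts, to which the quoted inequality does not apply as stated. The paper resolves this by one application of the refinement equation (\ref{REphinm}), rewriting everything as a combination of $2^{-1}$-shifts of $\phi^{(n-r,1)}$ (its display (\ref{der120})), for which variation diminishing of $\bPhi^{(n-r,1)}$ is available; you should insert the same step. Second, the plateau issue you single out as the main obstacle is genuine for the word ``strictly,'' but it is equally unaddressed in the paper, which passes from ``one strict sign change, located at the midpoint'' directly to strict positivity of $D\phi^{(n,m)}$ on the open left half of the support; your rigidity sketch is no less complete than what the paper offers, though neither closes that point.
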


\begin{proof}
Since any mask $\ba^{(n,m)}$ is centrally symmetric, any $\phi^{(n,m)}$ is centrally symmetric, too. As a consequence $D\,\varphi^{(n,m)}$ is
centrally antisymmetric and $D^2\,\varphi^{(n,m)}$ is centrally symmetric.
For $m>0$  (\ref{dervm}) and (\ref{derdiffvm}) give 
\begin{equation} \label{der12m}
\begin{array}{l}
D\,\varphi^{(n,m)} = \displaystyle \frac 1{h} \left (\varphi^{(n-1,m)}-\varphi^{(n-1,m)}(\cdot-h) \right )\,, \\ \\
D^2\,\varphi^{(n,m)} =\displaystyle \frac 1{h^2} \left (\varphi^{(n-1,m)}-2\,\varphi^{(n-1,m)}(\cdot-h)+\varphi^{(n-1,m)}(\cdot-2\,h) \right )\,,
\end{array}
\end{equation}
where $h=2^{-m}$, while for $m=0$ (\ref{dervm})-(\ref{derdiffvm}) together with (\ref{REphinm}) yield 
\begin{equation} \label{der120}
\begin{array}{lcl}
D \, \varphi^{(n,0)} &= &\frac 1h \left(\frac12 \, \varphi^{(n-1,1)}(x)-\frac12 \, \varphi^{(n-1,1)}(x-2\,h)\right)\,, \\ \\
D^2 \, \varphi^{(n,0)} &= & \frac 1{h^2} \bigl (\frac12 \, \varphi^{(n-1,1)}(x)-\frac12 \, \varphi^{(n-1,1)}(x-h)- \\ \\
& &  \frac12 \, \varphi^{(n-1,1)}(x-2\,h)+\frac12 \, \varphi^{(n-1,1)}(x-3\,h) \bigr )\,, 
\end{array}
\end{equation}
where $h=2^{-1}$.
Since $\varphi^{(n-1,m)}$ is positive in $(0,|\supp \, \phi^{(n-1,m)}|)$ and vanishes for $x\le 0$, $D\varphi^{(n,m)}$ 
and $D^2\varphi^{(n,m)}$ are positive for $0<x<h$.
Now, by the variation diminishing property from the first relations in (\ref{der12m}) and (\ref{der120}) it follows that $D\varphi^{(n,m)}$ has just one sign change which has to be in $|\supp \, \phi^{(n,m)}|/2$; 
thus, $D\varphi^{(n,m)}>0$ in $(0,|\supp \, \phi^{(n,m)}|/2)$ and $D\varphi^{(n,m)}<0$ in $(|\supp \, \phi^{(n,m)}|/2,|\supp \, \phi^{(n,m)}|)$. 
As for the second derivative, a direct computation shows that $D^2\varphi^{(n,m)}$ is negative in 
$|\supp \, \phi^{(n,m)}|/2$ so that $D^2\varphi^{(n,m)}$ has at least two sign changes; but from the variation diminishing property and the second relations in (\ref{der12m}) and (\ref{der120}) it follows that $D^2\varphi^{(n,m)}$ cannot have more than two sign changes, so proving the claim.
\end{proof}

\section{Nonstationary Prewavelets}

From the results in the previous sections it follows that, for any $n\ge 2$ held fix, the spaces 
$V^{(n,m)}$, $m \in \ZZ_+$, generate a nonstationary multiresolution analysis as defined in Section~2.

We note that the $L_2(\RR)$-stability of the basis $\bPhi^{(n,m)}$ implies that the symbol of the autocorrelation of $\varphi^{(n,m)}$,
i.e. the polynomial
\begin{equation}
\begin{array}{lcl}
\rho_\phi^{(n,m)}(\omega)&=&\displaystyle \sum_{\alpha \in \ZZ} \left ( \int_\RR \, \varphi^{(n,m)} \, \varphi^{(n,m)}(\cdot+2^{-m}\alpha) 
\right) \ e^{-i\omega\,2^{-m}\,\alpha}= \\ \\
& = & 2^m \,\displaystyle \sum_{\alpha \in \ZZ} \left | {\widehat \varphi}^{(n,m)}(\omega+ 2^{m+1}\pi \,\alpha )\right |^2\,,
\end{array}
\end{equation}
is non vanishing for any $\omega \in \RR$. 
The vector ${\mathbf \eta}^{(n,m)}= [\eta^{(n,m)}_{\alpha}]^T$, where 
$
\eta^{(n,m)}_{\alpha} = \int_\RR \, \phi^{(n,m)} \, \phi^{(n,m)}(\cdot+2^{-m}\alpha))\,,
$
 is the eigenvector corresponding to the eigenvalue 1 of the transition operator 
\begin{equation}
(T^{(n,m)} \ \blambda)_\alpha=  2\,\sum_{\beta \in \ZZ} \ \check a_{2\alpha-\beta}^{(n,m)} \ \lambda_\beta\,,  \qquad \alpha \in \ZZ\,,
\qquad \blambda \in \ell_0(\ZZ)\,,
\end{equation}
where
$ \check \ba^{(n,m)}= \{ \check a^{(n,m)}_\alpha=\sum_{\beta \in \sigma_a^{(n,m)}} \, a_\beta ^{(n,m)} \, a_{\beta-\alpha} ^{(n,m)}, \alpha \in \ZZ\}$,  is the autocorrelation of the mask $a^{(n,m)}$\,.
The sequence $\{\eta^{(n,m)}_{\alpha}\}$ is positive, compactly supported with
$$
\sigma_\eta^{(n,m)} = supp \{\eta^{(n,m)}_{\alpha}\} = \left \{ \begin{array}{ll}
\bigl [-\left[ \dfrac n2 +1 \right], \left[ \dfrac n2+1\right]\bigr]\,, & m=0\,, \\ \\
{[-n-1,n+1]}\,, & m>0\,,
\end{array} \right.
$$
and centrally symmetric. As a consequence $\rho^{(n,m)} \in \RR$  with
\begin{equation}
0<\rho_\phi^{(n,m)}(2^m\pi)\le \rho_\phi^{(n,m)}(\omega) \le \rho_\phi^{(n,m)}(0) = 2^m\,.
\end{equation}
Since
\begin{equation}
\begin{array}{lcl}
\rho_\phi^{(n,m)}(2^m\pi)&=&\sum_{\alpha \in \ZZ} \, (-1)^\alpha \, \left ( \int_\RR \, \phi^{(n,m)} \, \phi^{(n,m)}(\cdot+2^{-m}\alpha)) \right)= \\ \\
&=& \sum_{\alpha \in \ZZ} \, (-1)^\alpha \bigl(\phi^{(n,m)}*\phi^{(n,m)})\bigr)(L_{(n,m)}+2^{-m}\alpha)\,,
\end{array}
\end{equation}
at any level $m$ the basis $\bPhi^{(n,m)}$ is a non orthogonal basis.

The nonstationary multiresolution analysis $\{ V^{(n,m)}: m \in \ZZ_+\}$ allows us to define a wavelet space sequence $\{ W^{(n,m)}: m\in \ZZ_+\}$, where each space $W^{(n,m)}$ is the orthogonal complement of  
$V^{(n,m)}$ in $V^{(n,m+1)}$.
Since $\phi^{(n,m)}$ is non orthogonal, orthogonal wavelets with compact support do not exists. On the other hand,
due to the $L_2(\RR)$-stability and the compact support of each $\phi^{(n,m)}$, it is always possible to construct compactly supported semiorthogonal wavelets \cite[Th. 3.12]{BDR93}.

The explicit expression of the prewavelet of minimal support can be obtained generalizing to the
nonstationary case the results in \cite{Mi91} (see also \cite{GP08-2}). 

\medskip
\begin{theorem} \label{th_prewave}
For any $ n \ge 2$, the functions 
\begin{equation} \label{prewave} 
\begin{array}{l}
\displaystyle \psi^{(n,0)} = \sum_{\alpha = -n}^{n+1} \, (-1)^\alpha \, g_{\alpha-1}^{(n,0)} \ 
\phi^{(n,1)}(\cdot-2^{-1}\alpha)\,, \\ \\
\displaystyle \psi^{(n,m)} = \sum_{\alpha =-2n}^{n+1} \, (-1)^\alpha \, g_{\alpha-1}^{(n,m)} \ 
\phi^{(n,m+1)}(\cdot-2^{-(m+1)}\alpha)\,, \qquad m>0\,,
\end{array}
\end{equation}
where
\begin{equation} \label{munm}
g_{\alpha}^{(n,m)} =   \int_{\RR} \phi^{(n,m)}\
\phi^{(n,m+1)}(\cdot+2^{-(m+1)}\alpha)\,, \qquad \alpha \in \ZZ\,,
\end{equation} 
are the prewavelets generating the wavelet spaces 
\begin{equation}
W^{(n,m)}= \overline{span}\left \{\psi^{(n,m)} (\cdot-2^{-m}\alpha): \alpha \in \ZZ \right\}\,, \qquad m \in \ZZ_+\,. 
\end{equation}
The functions $\psi^{(n,m)}$ are compactly supported. 
Moreover, any system
\begin{equation}
\bPsi^{(n,m)}=\{\psi^{(n,m)}(x-2^{-m}\alpha): \alpha\in \ZZ\}
\end{equation} 
is $L_2(\RR)$-stable and linearly independent. 
Finally, $\psi^{(n,m)}$ has $n-1$ vanishing moments.  
\end{theorem}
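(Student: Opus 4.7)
The plan is to adapt the minimally supported prewavelet construction of \cite{Mi91} to the present nonstationary setting, verifying the four assertions one at a time. Both $\psi^{(n,m)}\in V^{(n,m+1)}$ and its compact support are immediate from (\ref{prewave}): each $\psi^{(n,m)}$ is a finite linear combination of $2^{-(m+1)}$-translates of the compactly supported $\phi^{(n,m+1)}$. The specific index ranges stated in (\ref{prewave}) come from (\ref{suppphi}) together with the observation that (\ref{munm}) forces $g^{(n,m)}_{\alpha-1}=0$ as soon as the supports of $\phi^{(n,m)}$ and of the shifted $\phi^{(n,m+1)}$ become disjoint.

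The crucial step is the orthogonality $\psi^{(n,m)}\perp V^{(n,m)}$. First I would use a change of variable to identify
\[
\bigl\langle \phi^{(n,m+1)}(\cdot-2^{-(m+1)}\alpha),\,\phi^{(n,m)}(\cdot-2^{-m}\beta)\bigr\rangle = g^{(n,m)}_{2\beta-\alpha}.
\]
Substituting into $\langle\psi^{(n,m)},\phi^{(n,m)}(\cdot-2^{-m}\beta)\rangle$ via (\ref{prewave}) then produces
\[
\sum_{\alpha} (-1)^\alpha\, g^{(n,m)}_{\alpha-1}\, g^{(n,m)}_{2\beta-\alpha},
\]
which is antisymmetric under $\alpha\mapsto 2\beta+1-\alpha$ and hence vanishes. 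To upgrade $\psi^{(n,m)}\in W^{(n,m)}$ to the statement that its $2^{-m}$-shifts span all of $W^{(n,m)}$, I would invoke the standard codimension/minimal-support argument of \cite{Mi91}, which carries over verbatim to the nonstationary case.

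For $L_2(\RR)$-stability and linear independence of $\bPsi^{(n,m)}$ I would pass to the Fourier side. Writing $\widehat{\psi}^{(n,m)}(\omega)=G^{(n,m)}\bigl(e^{-i\omega/2^{m+1}}\bigr)\,\widehat{\phi}^{(n,m+1)}(\omega)$ with wavelet symbol $G^{(n,m)}(z)=\sum_\alpha (-1)^\alpha g^{(n,m)}_{\alpha-1}z^\alpha$, a Poisson summation yields
\[
\rho_\psi^{(n,m)}(\omega)=\tfrac12\bigl(|G^{(n,m)}(z)|^2\,\rho_\phi^{(n,m+1)}(\omega)+|G^{(n,m)}(-z)|^2\,\rho_\phi^{(n,m+1)}(\omega+2^{m+1}\pi)\bigr),
\]
with $z=e^{-i\omega/2^{m+1}}$. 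Since $\rho_\phi^{(n,m+1)}$ is strictly positive and the Hurwitz property of $A^{(n,m+1)}$ visible in (\ref{fattor_Anm}) prevents the simultaneous vanishing of $G^{(n,m)}(\pm z)$, the symbol $\rho_\psi^{(n,m)}$ stays strictly positive. This is the step I expect to be the main obstacle, since a quantitative lower bound for $\rho_\psi^{(n,m)}$ must be controlled uniformly in $m$ to obtain a genuine Riesz basis across scales.

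Finally, the $n-1$ vanishing moments follow by combining Theorem~\ref{th_poly} with the orthogonality just proved. For $m\ge 1$, $\Pi_{n-2}\subset V^{(n,m)}$, so every monomial $x^k$ with $0\le k\le n-2$ coincides on $\supp\psi^{(n,m)}$ with a finite linear combination of $2^{-m}$-shifts of $\phi^{(n,m)}$, each orthogonal to $\psi^{(n,m)}$; hence $\int x^k\,\psi^{(n,m)}\,dx=0$. For $m=0$, where Theorem~\ref{th_poly} only supplies constants, I would instead deduce the required zero of order $n-1$ of $\widehat{\psi}^{(n,0)}$ at $\omega=0$ from the structure of $G^{(n,0)}(z)$ at $z=1$, using the factorization (\ref{fattor_Anm}) and Corollary~\ref{CorSF} applied to $\widehat{\phi}^{(n,1)}$ to locate the zeros.
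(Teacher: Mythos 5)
Your argument follows the paper's proof step for step: compact support from the finiteness of the mask $\{g^{(n,m)}_{\alpha-1}\}$, orthogonality to $V^{(n,m)}$ via $\langle\phi^{(n,m+1)}(\cdot-2^{-(m+1)}\alpha),\phi^{(n,m)}(\cdot-2^{-m}\beta)\rangle=g^{(n,m)}_{2\beta-\alpha}$ (your antisymmetry under $\alpha\mapsto 2\beta+1-\alpha$ is exactly the reason the paper's sum $\sum_\alpha(-1)^\alpha g^{(n,m)}_{\alpha-1}g^{(n,m)}_{2\beta-\alpha}$ vanishes, a detail the paper omits), and stability from positivity of the autocorrelation symbol $\rho_\psi^{(n,m)}$; your even/odd splitting of the Poisson sum is an equivalent rewriting of the paper's expression for $\rho_\psi^{(n,m)}$, and the non-simultaneous vanishing of $G^{(n,m)}(\pm z)$ rests on the same two ingredients the paper uses (Hurwitz property of $A^{(n,m)}$ and strict positivity of $\rho_\phi^{(n,m+1)}$), via the factorization $G^{(n,m)}(e^{-i\omega/2^{m+1}})=-e^{-i\omega/2^{m+1}}A^{(n,m)}(-e^{i\omega/2^{m+1}})\,\rho_\phi^{(n,m+1)}(\omega+2^{m+1}\pi)$. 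One remark: the uniformity in $m$ of the Riesz bounds that you flag as ``the main obstacle'' is not required by the statement, which asserts stability of each system $\bPsi^{(n,m)}$ separately; so that concern can be dropped.

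The one genuine problem is your fix for the vanishing moments at $m=0$. You are right that Theorem~\ref{th_poly} only puts $\Pi_0$ inside $V^{(n,0)}$, so the duality argument (which is the paper's entire proof of this point, applied indiscriminately to all $m$) yields only one vanishing moment at level $0$. But your proposed repair cannot deliver a zero of order $n-1$ of $\widehat\psi^{(n,0)}$ at $\omega=0$: by the factorization above with $m=0$, the order of that zero equals the order of vanishing of $A^{(n,0)}(-e^{i\omega/2})$ at $\omega=0$, and $A^{(n,0)}(z)=\tfrac12(1+z)$ has only a \emph{simple} zero at $z=-1$ (the factor $(1+z)^{n-1}$ appears in $A^{(n,m)}$ only for $m>0$), while $\rho_\phi^{(n,1)}(\omega+2\pi)$ is bounded away from zero. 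So $\widehat\psi^{(n,0)}$ has a first-order zero at the origin and $\psi^{(n,0)}$ has exactly one vanishing moment, not $n-1$ (unless $n=2$). Your computation, carried through honestly, would therefore disprove the claimed moment count at $m=0$ rather than establish it; the correct conclusion is that the ``$n-1$ vanishing moments'' assertion holds for $m>0$ (where it follows either from $\Pi_{n-2}\subset V^{(n,m)}$ or from the factor $(1+z)^{n-1}$ in $A^{(n,m)}$), and must be weakened to a single vanishing moment at $m=0$.
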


\begin{proof}
A straightforward computation yields 
$$\begin{array}{ll}
\displaystyle \int_\RR & \psi^{(n,m)}) \ \phi^{(n,m)}(\cdot-2^{-m}\beta)  =\\ \\
&= \displaystyle \sum_\alpha \, (-1)^\alpha \, g_{\alpha-1}^{(n,m)} \ \int_\RR \ \phi^{(n,m+1)}(\cdot-2^{-(m+1)}\alpha) \ \phi^{(n,m)}(\cdot-2^{-m}\beta)  = \\ \\
&= \displaystyle \sum_\alpha \, (-1)^\alpha \, g_{\alpha-1}^{(n,m)} \, g_{2\beta-\alpha}^{(n,m)}=0\,,
\end{array}
$$
for any $\beta \in \ZZ$\,, i.e. $\psi^{(n,m)}$ is orthogonal to the space $V^{(n,m)}$. Due to the support of $\phi^{(n,m)}$  it follows
$$
\sigma^{(n,m)}_g= supp \, \{g_{\alpha}^{(n,m)}\}= \left \{ \begin{array}{ll}
\left [-n-1,n  \right], & m=0\,,  \\ \\
\left [-2n-1, n \right], & m>0\,, 
\end{array} \right.
$$
so that $\psi^{(n,m)}$ is compactly supported  with
\begin{equation} \label{supp_psi}
supp \ \psi^{(n,m)}= \left \{ \begin{array}{ll}
\left [-\frac n 2, n +1 \right], & m=0, \\ \\
\left [-2^{-m}n, 2^{-m}(n+1) \right], & m>0. 
\end{array} \right.  
\end{equation}
Now, the wavelet equations (\ref{prewave}) in the Fourier space reads
$$
\widehat \psi^{(n,m)}(\omega) = d^{(n,m)}(e^{-i\frac{\omega}{2^{m+1}}}) \, \widehat \phi^{(n,m+1)}(\omega)\,, \qquad m \in \ZZ_+\,,
$$
where
$
d^{(n,m)}(z) = \sum_\alpha \, d_\alpha^{(n,m)} \, z^\alpha$, $d_\alpha^{(n,m)} = (-1)^\alpha \, g_{\alpha-1}^{(n,m)}.
$
An explicit calculation gives
\begin{equation} \label{dnm}
d^{(n,m)}(e^{-i\frac{\omega}{2^{m+1}}}) = - e^{-i\frac{\omega}{2^{m+1}}}
A^{(n,m)}\bigl(- e^{i\frac{\omega}{2^{m+1}}}  \bigr) \rho_\phi^{(n,m+1)}(\omega+2^{m+1}\pi)\,.  
\end{equation}
As a consequence,
$$\begin{array}{lcl}
\rho_\psi^{(n,m)}(\omega)&=&\sum_{\alpha \in \ZZ} \left ( \int_\RR \, \psi^{(n,m)} \, \psi^{(n,m)}(\cdot+2^{-m}\alpha) 
\right) \ e^{-i\omega\,2^{-m}\,\alpha}= \\ \\
&=&2^m \, \sum_{\alpha \in \ZZ} \left |{\widehat \psi} ^{(n,m)}(\omega+2^{m+1}\pi \alpha)\right|^2 =\\ \\
&=&   2^m\,  \sum_{\alpha \in \ZZ} \, |A^{(n,m)}\bigl(- e^{i(\frac{\omega}{2^{m+1}}+\pi \alpha)}  \bigr)|^2 \,
\left |  \rho_\phi^{(n,m+1)}(\omega+2^{m+1}\pi (\alpha+1))  \right|^2 \times \\ \\
& & \times\,  
\left |  {\widehat \phi} ^{(n,m+1)}(\omega+2^{m+1}\pi \alpha)  \right|^2 \,, 
\qquad \omega \in \RR\,,
\end{array}
$$
is non-vanishing.
It follows that at any level $m,$ the system $\bPsi^{(n,m)}$, generating the wavelet space $W^{(n,m)}$,
is linearly independent and $L_2(\RR)$-stable. 
Moreover, $\psi^{(n,m)}$ is non orthogonal with
$$
\int_\RR \, \psi^{(n,m)} \ \psi^{(n,m)}(\cdot+2^{-m}\alpha) = \sum_{\beta} \, (-1)^\beta \eta^{(n,m+1)}_{2\alpha-\beta} \, \check{g}^{(n,m)}_\beta\,, \qquad  \alpha \in \ZZ\,,  
$$
where
$\{\check{g}^{(n,m)}_\alpha\}=\{\sum_\gamma g_\gamma^{(n,m)} \,g_{\gamma-\alpha}^{(n,m)}\}$ is the autocorrelation of 
the sequence $\{g_\alpha^{(n,m)}\}$.
\\
Finally, since $W^{(n,m)}\perp V^{(n,m)}$, from Th. \ref{th_poly} it follows that 
$\int_\RR \, x^d\, \psi^{(n,m)}(x) = 0$, $0\le d \le n-2$, so concluding the proof.
\end{proof}

\medskip
\begin{theorem} \label{Th_suppmin}
For any $n \ge 2$ and $m\in \ZZ_+$, $\psi^{(n,m)}$ is the unique minimally supported wavelet   generating the 
wavelet space $ W^{(n,m)}$.
\end{theorem}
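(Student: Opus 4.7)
The plan is to characterize all compactly supported wavelets $\tilde\psi\in W^{(n,m)}$ in terms of a Laurent-polynomial symbol and then read off that each such $\tilde\psi$ is necessarily a finite linear combination of $2^{-m}$-shifts of $\psi^{(n,m)}$. Since $\tilde\psi\in W^{(n,m)}\subset V^{(n,m+1)}$ has compact support and $\bPhi^{(n,m+1)}$ is an $L_2(\RR)$-stable basis, $\tilde\psi$ admits the finite representation
$$
\tilde\psi=\sum_{\alpha}\tilde c_\alpha\,\phi^{(n,m+1)}(\cdot-2^{-(m+1)}\alpha),\qquad \tilde C(\zeta)=\sum_\alpha \tilde c_\alpha\,\zeta^\alpha.
$$
Taking Fourier transforms, the orthogonality $\tilde\psi\perp V^{(n,m)}$ is equivalent, via the Poisson summation identity, to
$$
\sum_{k\in\ZZ}\widehat{\tilde\psi}(\omega+2^{m+1}\pi k)\,\overline{\widehat\phi^{(n,m)}(\omega+2^{m+1}\pi k)}=0.
$$
Using the refinement relation $\widehat\phi^{(n,m)}=A^{(n,m)}(\zeta)\,\widehat\phi^{(n,m+1)}$ with $\zeta=e^{-i\omega/2^{m+1}}$ and splitting the sum into even and odd $k$, this identity collapses to
$$
\tilde C(\zeta)\,\overline{A^{(n,m)}(\zeta)}\,\rho_\phi^{(n,m+1)}(\omega)+\tilde C(-\zeta)\,\overline{A^{(n,m)}(-\zeta)}\,\rho_\phi^{(n,m+1)}(\omega+2^{m+1}\pi)=0.
$$

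Second, I combine the displayed identity with the explicit expression (\ref{dnm}) of $d^{(n,m)}$ to produce the symbolic relation $\tilde C(\zeta)\,d^{(n,m)}(-\zeta)=\tilde C(-\zeta)\,d^{(n,m)}(\zeta)$ (multiplying through by $\zeta$ and using that $\rho_\phi^{(n,m+1)}$ is $2^{m+2}\pi$-periodic). The decisive algebraic input is that the Laurent polynomials $d^{(n,m)}(\zeta)$ and $d^{(n,m)}(-\zeta)$ are coprime: since $A^{(n,m)}$ is Hurwitz (as noted after (\ref{fattor_Anm})), its zeros lie strictly in the open left half-plane, so $\overline{A^{(n,m)}(-\zeta)}$ and $\overline{A^{(n,m)}(\zeta)}$ share no common zero, and the auxiliary factor $\rho_\phi^{(n,m+1)}$ is strictly positive on $\RR$ with its $\zeta\to-\zeta$ shift producing coprime trigonometric factors. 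The coprimality forces $d^{(n,m)}(\zeta)\mid \tilde C(\zeta)$ in the ring of Laurent polynomials, and re-inserting $\tilde C(\zeta)=P(\zeta)\,d^{(n,m)}(\zeta)$ into the symmetry yields $P(\zeta)=P(-\zeta)$, hence $P(\zeta)=Q(\zeta^2)$ for some Laurent polynomial $Q$.

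Translating back to the time domain via the inverse Fourier transform and the definition of $\psi^{(n,m)}$, the factorization reads
$$
\tilde\psi=\sum_{\beta}q_\beta\,\psi^{(n,m)}(\cdot-2^{-m}\beta),\qquad Q(w)=\sum_\beta q_\beta\,w^\beta.
$$
Thus every compactly supported wavelet in $W^{(n,m)}$ is a finite linear combination of $2^{-m}$-shifts of $\psi^{(n,m)}$. Using the explicit support description (\ref{supp_psi}) for $\psi^{(n,m)}$, one sees that the length of $\text{supp}\,\tilde\psi$ equals $|\text{supp}\,\psi^{(n,m)}|+2^{-m}\bigl(\deg Q^+ - \deg Q^-\bigr)$, where $\deg Q^\pm$ are the extreme exponents of $Q$; this is minimized precisely when $Q$ reduces to a single monomial $q_{\beta_0}w^{\beta_0}$, in which case $\tilde\psi$ is a (nonzero) scalar multiple of a translate of $\psi^{(n,m)}$. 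Any other $Q$ strictly enlarges the support, so $\psi^{(n,m)}$ is, up to integer shift and scalar normalization, the unique minimally supported generator of $W^{(n,m)}$.

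The main obstacle is the coprimality step: although the Hurwitz property handles the mask factors cleanly, one must still verify that no spurious common zero is introduced by the combined factor $\overline{A^{(n,m)}(-\zeta)}\rho_\phi^{(n,m+1)}(\omega+2^{m+1}\pi)$ and its $\zeta\to-\zeta$ transform, so that the divisibility $d^{(n,m)}(\zeta)\mid \tilde C(\zeta)$ really holds over Laurent polynomials and not merely over rational functions.
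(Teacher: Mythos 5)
Your argument is essentially the paper's own proof: both reduce the uniqueness and minimality claim to the coprimality of $d^{(n,m)}(z)$ and $d^{(n,m)}(-z)$, deduced from the factorization (\ref{dnm}) together with the Hurwitz property of $A^{(n,m)}$ and the positivity of $\rho_\phi^{(n,m+1)}$, and your bracket-product derivation of $\tilde C(\zeta)\,d^{(n,m)}(-\zeta)=\tilde C(-\zeta)\,d^{(n,m)}(\zeta)$ is the paper's even/odd identity $d_e^{(n,m)}g_e^{(n,m)}+z\,d_o^{(n,m)}g_o^{(n,m)}=0$ in different notation. The coprimality caveat you flag at the end (possible common zeros of the $\rho$-factors off the unit circle) is equally present and equally unaddressed in the paper's proof, which likewise invokes only the positivity of $\rho_\phi^{(n,m+1)}$ on $\RR$, so your proposal is at least as complete as the original.
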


\begin{proof}
The orthogonality conditions
$
\displaystyle \int_\RR \, \psi^{(n,m)}(x) \ \phi^{(n,m)}(x-2^{-m}\beta)  \, dx = 0$, $\beta \in \ZZ,
$
can be proved to be equivalent to the conditions
\begin{equation} \label{dg_orth}
\sum_\alpha \, d^{(n,m)}_\alpha \, g^{(n,m)}_{2\beta-\alpha}=0\,,\qquad \beta \in \ZZ\,.
\end{equation}
Let
$
d_e^{(n,m)}(z) = \sum_\alpha \, d_{2\alpha}^{(n,m)}\, z^\alpha$, $d_o^{(n,m)}(z) = \sum_\alpha \, d_{2\alpha+1}^{(n,m)}\, z^\alpha
$,
and, similarly, for the polynomial $g^{(n,m)}(z)=\sum_\alpha \, g^{(n,m)}_\alpha\, z^\alpha$, let
$\displaystyle g_e^{(n,m)}(z) = \sum_\alpha \, g_{2\alpha}^{(n,m)}\, z^\alpha$, $ g_o^{(n,m)}(z) = \sum_\alpha \, g_{2\alpha+1}^{(n,m)}\, z^\alpha$,
so that
$d^{(n,m)}(z)=d_e^{(n,m)}(z^2)+z\,d_o^{(n,m)}(z^2)$, and $g^{(n,m)}(z)=g_e^{(n,m)}(z^2)+z\,g_o^{(n,m)}(z^2)$.
It follows that conditions (\ref{dg_orth}) are equivalent to
\begin{equation} \label{orth_prew}
d_e^{(n,m)}(z)\,g_e^{(n,m)}(z)+z\,d_o^{(n,m)}(z)\,g_o^{(n,m)}(z)=0\,.
\end{equation}
Thus, $\psi^{(n,m)}$ is the minimally supported prewavelet if and only if $d^{(n,m)}$ is the minimally supported polynomial
satisfying (\ref{orth_prew}), i.e. if and only if $d_e^{(n,m)}(z)$ and $d_o^{(n,m)}(z)$ have no common zeros or, equivalently,
$d^{(n,m)}(z)$ and $d^{(n,m)}(-z)$ have no common zeros for $z=e^{i\frac{\omega}{2^{m+1}}}$.
But this easily follows from (\ref{dnm}) since  any symbol $A^{(n,m)}(z)$ has only zeros with negative real part and $\rho_\phi^{(n,m+1)}(\omega)$ is positive.
\end{proof}
\medskip

We note that $\psi^{(n,m)}$ is the $2^m$-dilate of the prewavelet constructed in \cite{GP08} 
and Th. \ref{th_prewave} and Th. \ref{Th_suppmin} generalize to the nonstationary case the stationary prewavelet constructed in \cite{Mi91}.

Even if the prewavelet coefficients $\{g^{(n,m)}_\alpha\}$ do not have an explicit expression, they can be evaluated 
efficiently by an iterative algorithm.

\medskip
\begin{theorem} \label{Th_4.3}
Let $M^{(n,m)}=[g_{\alpha}^{(n,m)},\alpha \in \sigma_g^{(n,m)}]^T$ be the nonstationary prewavelet coefficients, and let $M^{(n)}=[g_{\alpha}^{(n)},\alpha \in \sigma_g^{(n,m)}]^T$ be the prewavelet coefficients corresponding to the
stationary fundamental mask $\ba^{(n)}$. \\
For any $n\ge2$ and $m\in \ZZ_+$ held fixed, consider the iterative procedure
\begin{equation} \label{algprew}
\left \{ \begin{array}{l}
P^{(n,m)}_0 = M^{(n)} \,,\\
P^{(n,m)}_{k+1} = C^{(n,m)} \ P^{(n,m+1)}_k\,, \qquad k \ge 0\,,
\end{array} \right.
\end{equation}
where $C^{(n,m)}=(c_{2\alpha-\beta}^{(n,m)})$ with
\begin{equation}\label{maskprew}
c^{(n,m)}_\alpha = \displaystyle \sum_{\beta \in \sigma_a^{(n,m)}}\ a^{(n,m)}_\beta \ a^{(n,m+1)}_{\alpha +2\beta}\,, \qquad
  \beta \in \sigma_c^{(n,m)}\,, 
\end{equation}
and
\begin{equation}
\sigma_c^{(n,m)} = \left \{
\begin{array}{ll}
\left [ -2,n+1\right]\,, & m=0\,, \\
\left [ -2(n+1),n+1\right]\,, & m>0\,.
\end{array} \right.  
\end{equation}
The sequence $\{P^{(n,m)}_k\}$ converges strongly to $M^{(n,m)}$ when $k\to \infty$.
Moreover, the following error estimate holds
\begin{equation}
\|P^{(n,m)}_k-M^{(n,m)}\| \le \gamma_{n,m} \  \| M^{(n)}-M^{(n,m+k)}\|\,,
\end{equation}
where $\gamma_{n,m}$ is a positive constant independent from $k$.
\end{theorem}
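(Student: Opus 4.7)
The plan is to derive a fixed-point recursion relating $M^{(n,m)}$ to $M^{(n,m+1)}$, iterate it, and compare with the unwinding of the algorithm. First, substituting the refinable equation (\ref{REphinm}) for $\phi^{(n,m)}$ and the analogous one for $\phi^{(n,m+1)}$ into the definition (\ref{munm}) of $g^{(n,m)}_\alpha$ and performing a change of variable in the resulting double integral, I would show that the double sum collapses into $g^{(n,m)}_\alpha = \sum_{\delta} c^{(n,m)}_\delta\, g^{(n,m+1)}_{2\alpha-\delta}$, with coefficients exactly those of (\ref{maskprew}). In matrix form this is the key recursion $M^{(n,m)} = C^{(n,m)} M^{(n,m+1)}$.

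A straightforward induction on $k$ then unwinds (\ref{algprew}) as $P^{(n,m)}_k = C^{(n,m)} C^{(n,m+1)} \cdots C^{(n,m+k-1)}\, M^{(n)}$, while $k$-fold application of the recursion above yields $M^{(n,m)} = C^{(n,m)} C^{(n,m+1)} \cdots C^{(n,m+k-1)}\, M^{(n,m+k)}$. Subtracting these identities gives the telescoping relation
$P^{(n,m)}_k - M^{(n,m)} = C^{(n,m)} \cdots C^{(n,m+k-1)}\,\bigl(M^{(n)} - M^{(n,m+k)}\bigr)$, which is the backbone of the error estimate.

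To bound the matrix product uniformly in $k$, I would observe that $c^{(n,m)}_\delta \ge 0$ (since each mask $\ba^{(n,m)}$ has non-negative entries) and that interchanging summation order, combined with the sum rule $\sum_\alpha a^{(n,m+1)}_\alpha = 1$, gives $\sum_\delta c^{(n,m)}_\delta = \sum_\beta a^{(n,m)}_\beta = 1$. Hence each $C^{(n,m)}$ is row-stochastic with $\|C^{(n,m)}\|_\infty = 1$, and since for $m > 0$ the supports $\sigma_g^{(n,m)}$ are contained in the fixed finite set $[-2n-1,n]$, the matrices act on a finite-dimensional space; norm equivalence then produces $\|C^{(n,m)} \cdots C^{(n,m+k-1)}\| \le \gamma_{n,m}$ uniformly in $k$, and the error estimate follows immediately.

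Strong convergence reduces to showing $\|M^{(n)} - M^{(n,m+k)}\| \to 0$ as $k \to \infty$. This should follow from the explicit convergence $\ba^{(n,m)} \to \ba^{(n)}$ (as $2^{m^{-\mu}} \to 1$) together with the continuous dependence of the prewavelet coefficients on the mask sequence, as extracted from the infinite product (\ref{infprodnm}) for ${\widehat \phi}^{(n,m)}$. This last step is where I expect the main obstacle: making the continuous dependence of $g^{(n,m)}_\alpha$ on the mask sequence fully quantitative is delicate because the refinable functions $\phi^{(n,m)}$ live on shrinking scales $2^{-m}$, so one must carefully match the normalization under which the stationary $g^{(n)}_\alpha$ are defined against the natural scale of the inner products defining $g^{(n,m)}_\alpha$.
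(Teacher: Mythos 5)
Your proposal is correct and follows the same skeleton as the paper's proof: derive the exact recursion $M^{(n,m)}=C^{(n,m)}M^{(n,m+1)}$ from (\ref{REphinm}) and (\ref{munm}), telescope it against the unwinding of (\ref{algprew}) to get $P^{(n,m)}_k-M^{(n,m)}=\bigl(\prod_{l=0}^{k-1}C^{(n,m+l)}\bigr)\bigl(M^{(n)}-M^{(n,m+k)}\bigr)$, bound the matrix product uniformly in $k$, and invoke $M^{(n,k)}\to M^{(n)}$. The one place you genuinely diverge is the uniform bound on $\prod_l C^{(n,m+l)}$. The paper gets it from the fundamental-mask summability $\sum_{m}\|C^{(n,m)}-C^{(n)}\|<\infty$ together with Prop.~2.1 of \cite{GL99}, yielding $\gamma_{n,m}=\exp\bigl(\sum_{m}\|C^{(n,m)}-C^{(n)}\|\bigr)$; you instead exploit the positivity of the masks and the sum rule to show each $C^{(n,m)}$ is (sub)stochastic, so the product has $\infty$-norm at most $1$ and norm equivalence supplies $\gamma_{n,m}$. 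Your route is more elementary and gives a cleaner constant, but it is special to nonnegative masks, whereas the paper's perturbation argument works for any mask sequence admitting a fundamental mask. On the final step, the paper simply asserts $\lim_{k\to\infty}M^{(n,k)}=M^{(n)}$ without proof, so your flagged concern about the scale normalization of $g^{(n,k)}_\alpha$ (the inner products live on supports of length $O(2^{-k})$, so the comparison with the stationary coefficients must be made at matching scale) is not a gap relative to the paper --- if anything you are more careful than the source on this point.
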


\begin{proof}
Using the refinable equation (\ref{REphinm}) in (\ref{munm}) we get
$
g_\alpha^{(n,m)} = \sum_\beta \, c_{2\alpha-\beta} \, g_\beta^{(n,m)}
$,
which in matrix form can be written as
$
M^{(n,m)}=C^{(n,m)} \, M^{(n,m+1)}
$.
Repeated applications of both algorithm (\ref{algprew}) and the relation above give
$$
\begin{array}{lcl}
\|P^{(n,m)}_{k+1}-M^{(n,m)}\| &=&  \| \prod_{l=0}^k C^{(n,m+l)} (M^{(n)}-M^{(n,m+k+1)}) \| \le \\ \\
&\le&\| \prod_{l=0}^k \ C^{(n,m+l)}\| \cdot \| M^{(n)}-M^{(n,m+k+1)}\|\,.
\end{array}
$$
Now, let $C^{(n)}=(c_{2\alpha-\beta}^{(n)})$, where
$c^{(n)}_\alpha = \sum\, a^{(n)}_\beta \, a^{(n)}_{\alpha+2\beta}$. We note that $\rho(C^{(n)})=1$.
Since $\ba^{(n)}$ is the fundamental mask of the mask sequence $\{\ba^{(n,m)}\}$, it follows
$
\sum_{m \in \ZZ_+} \, \|C^{(n,m)}-C^{(n)}\| < \infty
$,
and 
$$
\| \prod_{l=m}^{m+k}  C^{(n,l)}\| \le \exp \left ( \sum_{m\in \ZZ_+} \|C^{(n,m)}-C^{(n)}\| \right )<\infty\,,
$$
(cf. \cite[Prop. 2.1]{GL99}). 
Moreover, 
$
\lim_{k \to \infty} M^{(n,k)}= M^{(n)} \,,
$
thus,
$$
\lim_{k \to \infty} \|P^{(n,m)}_{k+1}-M^{(n,m)}\|=0\,,
$$
and the claim follows with $\gamma_{n,m}= \exp \left ( \sum_{m\in \ZZ_+} \|C^{(n,m)}-C^{(n)}\| \right )$.
\end{proof}

\section{Nonstationary Biorthogonal Bases}

The refinable functions $\phi^{(n,m)}$ and the prewavelets $\psi^{(n,m)}$ are linearly independent and $L_2(\RR)$-stable, but they are not orthogonal. As a consequence, the dual bases of $\phi^{(n,m)}$ and $\psi^{(n,m)}$ in $V^{(n,m)}$ and  $W^{(n,m)}$, respectively, have infinite support. 
Nevertheless, compactly supported bases giving rise to efficient
reconstruction and decomposition formula of a given discrete signal, can be obtained introducing biorthogonal bases. 

The theory
of biorthogonal bases in the stationary case \cite{CDF92} can be generalized to the nonstationary framework
(cf. \cite{Ha10,HZ08,VBU07}).
For any multiresolution analysis $\{V^{(n,m)}: m\in \ZZ_+\}$ we can introduce a biorthogonal multiresolution analysis  
$\{\tilde V^{(n,m)}: m\in \ZZ_+\}$ with
\begin{equation} \label{biorthMRA}
\tilde V^{(n,m)} \subset \tilde V^{(n,m+1)}\,, \qquad m \in \ZZ_+\,,
\end{equation}
and biorthogonal wavelet spaces $\{W^{(n,m)}: m\in \ZZ_+\}$ and $\{\tilde W^{(n,m)}: m\in \ZZ_+\}$, such that for any $m\in \ZZ_+$
\begin{equation} \label{biorthwave}
\begin{array}{l}
W^{(n,m)} =  V^{(n,m+1)} \ominus V^{(n,m)}\,, \quad \tilde W^{(n,m)} =  \tilde V^{(n,m+1)} \ominus \tilde V^{(n,m)}\,,  \\ \\ 
W^{(n,m)} \perp \tilde V^{(n,m)}\,, \quad \tilde W^{(n,m)} \perp  V^{(n,m)}\,. 
\end{array}
\end{equation}
At each level $m$, the spaces $V^{(n,m)}$, $\tilde V^{(n,m)}$, $W^{(n,m)}$ and $\tilde W^{(n,m)}$  are generated
by the $2^{-m}$-integer translates of the biorthogonal functions $\phi^{(n,m)}$, $\tilde \phi^{(n,m)}$, $\psi^{(n,m)}$ and $\tilde \psi^{(n,m)}$, respectively, satisfying the following biorthogonality conditions:
\begin{equation} \label{biorthnm}
\begin{array}{l}
\bigl\langle \phi^{(n,m)}(\cdot-2^{-m}\alpha),\tilde \phi^{(n,m)}(\cdot-2^{-m}\beta)\bigr \rangle=\delta_{\alpha\beta}\,, \\ \\
\bigl \langle \psi^{(n,m)}(\cdot-2^{-m}\alpha),\tilde \psi^{(n,m)}(\cdot-2^{-m}\beta)\bigr \rangle=\delta_{\alpha\beta}\,, \\ \\
\bigl \langle \phi^{(n,m)}(\cdot-2^{-m}\alpha),\tilde \psi^{(n,m)}(\cdot-2^{-m}\beta)\bigr \rangle=0\,, \\ \\
\bigl \langle \psi^{(n,m)}(\cdot-2^{-m}\alpha),\tilde \phi^{(n,m)}(\cdot-2^{-m}\beta)\bigr \rangle=0\,.
\end{array}
\end{equation}
We stress that all the biorthogonal functions $\phi^{(n,m)}$, $\tilde \phi^{(n,m)}$, $\psi^{(n,m)}$ and $\tilde \psi^{(n,m)}$, $m \in \ZZ_+$, cannot be obtained each other by dilation. Thus, none of the biorthogonal spaces at level $m$ is a 
scaled versions of the spaces at level $0$.

Biorthogonal bases for the exponential splines were constructed in \cite{VBU07}. Here, we want to construct
the biorthogonal bases associated with the nonstationary refinable functions $\phi^{(n,m)}$, $m \in \ZZ_+$.

As a consequence of (\ref{biorthMRA}) and (\ref{biorthwave}), the wavelet $\psi^{(n,m)}$ belongs to $V^{(n,m+1)}$, while
the biorthogonal functions $\tilde \psi^{(n,m)}$ and $\tilde \phi^{(n,m)}$ belong to  $\tilde V^{(n,m+1)}$, 
so that
\begin{equation}
\begin{array}{l}
\psi^{(n,m)} = \,\sum_{\alpha \in \ZZ} \, q^{(n,m)}_\alpha \, \phi^{(n,m+1)}(\cdot-2^{-(m+1)}\alpha)\,,
\qquad m\in \ZZ_+\,, \\ \\
\tilde \phi^{(n,m)} = \,\sum_{\alpha \in \ZZ} \, \tilde a^{(n,m)}_\alpha \, \tilde \phi^{(n,m+1)}(\cdot-2^{-(m+1)}\alpha)\,,
\qquad m\in \ZZ_+\,, \\ \\
\tilde \psi^{(n,m)} = \,\sum_{\alpha \in \ZZ} \, \tilde q^{(n,m)}_\alpha \, \tilde \phi^{(n,m+1)}(\cdot-2^{-(m+1)}\alpha)\,,
\qquad m\in \ZZ_+\,.
\end{array}
\end{equation}
Moreover, perfect reconstruction at each level $m$ is guaranteed if for any $m\in \ZZ_+$ the biorthogonal symbols 
\begin{equation}
\begin{array}{l}
 A^{(n,m)}(z)= \sum_{\alpha \in \ZZ} \, a^{(n,m)}_\alpha \, z^\alpha\,, \quad \tilde A^{(n,m)}(z)= \sum_{\alpha \in \ZZ} \, \tilde a^{(n,m)}_\alpha \, z^\alpha\,, \\ \\
Q^{(n,m)}(z)= \sum_{\alpha \in \ZZ} \, q^{(n,m)}_\alpha \, z^\alpha\,, \quad \tilde Q^{(n,m)}(z)= \sum_{\alpha \in \ZZ} \, \tilde q^{(n,m)}_\alpha \, z^\alpha\,,
\end{array}
\end{equation}
satisfy 
\begin{equation} \label{eqBezout}
A^{(n,m)}(z) \, \tilde A^{(n,m)}(z^{-1})+A^{(n,m)}(-z) \, \tilde A^{(n,m)}(-z^{-1})=1\,,    
\end{equation}
with
\begin{equation}
 Q^{(n,m)}(z) = - \tilde A ^{(n,m)}(-z^{-1})\,, \qquad 
\tilde Q^{(n,m)}(z) = A^{(n,m)}(-z^{-1})\,.   
\end{equation}
Identity (\ref{eqBezout}) is a Bezout's equation which has a unique polynomial solution $\tilde A^{(n,m)}$ of a given degree \cite{Dau92}.

Under mild conditions on the symbols $A^{(n,m)}$ and $\tilde A^{(n,m)}$, biorthogonality conditions (\ref{biorthnm}) guarantee that the biorthogonal bases
\begin{equation}
\begin{array}{l}
\bPhi^{(n,m)}=\{\phi^{(n,m)}(\cdot-2^{-m}\alpha), \alpha \in \ZZ\}\,, \quad
\bPsi^{(n,m)}=\{\psi^{(n,m)}(\cdot-2^{-m}\alpha), \alpha \in \ZZ\}\,, \\
\tilde \bPhi^{(n,m)}=\{\tilde \phi^{(n,m)}(\cdot-2^{-m}\alpha), \alpha \in \ZZ\}\,, \quad
\tilde \bPsi^{(n,m)}=\{\tilde \psi^{(n,m)}(\cdot-2^{-m}\alpha), \alpha \in \ZZ\}\,,
\end{array}
\end{equation}
are $L_2(\RR)$-stable, so that, for any $f \in L_2(\RR)$, the following decomposition formula holds
\begin{equation}
f = f_{m_0} + \displaystyle \sum_{m\ge m_0} \sum_ {\alpha\in \ZZ} \, \bigl \langle f,\tilde \psi^{(n,m)}(\cdot-2^{-m}\alpha)\bigr\rangle \, \psi^{(n,m)}(\cdot-2^{-m}\alpha)\,,
\end{equation} 
where 
\begin{equation}
f_{m_0}=\sum_ {\alpha\in \ZZ} \, \bigl \langle f,\tilde \phi^{(n,m_0)}(\cdot-2^{-m_0}\alpha)\bigr\rangle \, \phi^{(n,m_0)}(\cdot-2^{-m_0}\alpha)
\end{equation}
is the ${m_0}$-level approximation. Some examples of nonstationary biorthogonal bases will be given in the next section.

The sequences $\{a_\alpha\}$, $\{\tilde a_\alpha\}$, $\{q_\alpha\}$, $\{\tilde q_\alpha\}$  are pairs of biorthogonal FIR filters
that give rise to the decomposition and reconstruction algorithms
\begin{equation} \label{dec_alg}
\lambda_\alpha^m=\frac 1 {\sqrt{2}} \sum_{\beta\in \ZZ} \ {\tilde a}_{\beta-2\alpha}^{(n,m)} \ \lambda_\beta^{m+1}\,,
\qquad \zeta_\alpha^m=\frac 1 {\sqrt{2}} \sum_{\beta\in \ZZ} \ {\tilde q}_{\beta-2\alpha}^{(n,m)} \ \lambda_\beta^{m+1}\,,
\end{equation}
\begin{equation}\label{conv2}
\lambda_\alpha^{m+1}= \frac 1 {\sqrt{2}} \left [ \, \sum_{\beta\in \ZZ} \ a_{\alpha-2\beta}^{(n,m)}\ \lambda_\beta^m+\sum_{\beta\in\ZZ} \, q_{\alpha-2\beta}^{(n,m)} \ \zeta_\beta^m\right ] \,,   
\end{equation}
which can be efficiently used for the analysis and synthesis of a given data sequence $\blambda^0=\{\lambda^0_\alpha: \alpha \in \ZZ\}$. 

\section{A case study}

In this section we give some examples of both nonstationary prewavelets and  biorthogonal bases in the 
case when $n=3$.
In this case the nonstationary refinable functions $\varphi^{(3,m)}$, $m\ge0$, belong to $C^2(\RR)$, i.e.
they have the same smoothness as the cubic B-spline. Interestingly enough, any $\varphi^{(3,m)}$  
with $m> 0$ has the same support as $B^{(3,m)}$, i.e. $[0,4\cdot2^{-m}]$, while $\varphi^{(3,0)}$ is more localized 
in the scale-time plane having $supp \,\varphi^{(3,0)}=[0,5/2]$, a property that appears very useful in applications
(see the example below). In order to obtain refinable beses and nonstationary filters significantly different from 
those ones generated by the cubic B-spline, we choose $\mu=1.1$ as a value for the tension parameter. 
\\
\begin{table} 
\caption{Numerical values (rounded to the forth digit) of the mask coefficients $a^{(3,m)}_0$, $a^{(3,m)}_1$, $a^{(3,m)}_2$ for $m=0,\ldots,8$. Here $\mu=1.1$}
\label{Tab_1}
{\small 
\begin{tabular}{|c|ccccccccc|}
\hline
$m$ & 0 & 1 & 2 & 3 & 4 & 5 & 6 & 7 & 8 \\
\hline
$a^{(3,m)}_0$ & 0.5  &  0.0313  &  0.0452  &  0.0508  &  0.0537  &  0.0555  &  0.0567  &  0.0576  &  0.0583 \\
$a^{(3,m)}_1$ & 0.5  &  0.2500  &  0.2500  &  0.2500  &  0.2500  &  0.2500  &  0.2500  &  0.2500  &  0.2500 \\
$a^{(3,m)}_2$ &  0    &  0.4375  &  0.4095  &  0.3984  &  0.3925  &  0.3889  &  0.3865  &  0.3848  &  0.3835 \\
\hline
\end{tabular}
}

\end{table}
The coefficients of the mask $\mathbf a^{(3,0)}=\{ a^{(3,0)}_0,a^{(3,0)}_1\}$ are
$a^{(3,0)}_0=a^{(3,0)}_1=\frac 1 2$, while the coefficients of the mask 
$\mathbf a^{(3,m)}=\{ a^{(3,m)}_0,a^{(3,m)}_1,a^{(3,m)}_2,a^{(3,m)}_3,a^{(3,m)}_4\}$ for $m>0$ are
$$
a^{(3,m)}_0 =a^{(3,m)}_4 =\displaystyle 2^{-4-m^{-\mu}}\,, \qquad 
a^{(3,m)}_1 =a^{(3,m)}_3 = \displaystyle \frac 1 4\,, \quad
a^{(3,m)}_2 = \displaystyle \frac 1 2 - 2^{-3-m^{-\mu}}\,.
$$
The numerical values (rounded to the forth digit) of the mask coefficients are listed in Tab.~\ref{Tab_1}, while their behavior  
is shown in Fig.~\ref{Fig_1} (right). The behavior of $\varphi^{(3,0)}$ in comparison with $B^{(3,0)}$ is displayed in 
Fig.~\ref{Fig_1} (left). 
\begin{figure}
\centering
\begin{tabular}{ccc}
\includegraphics[width=5cm]{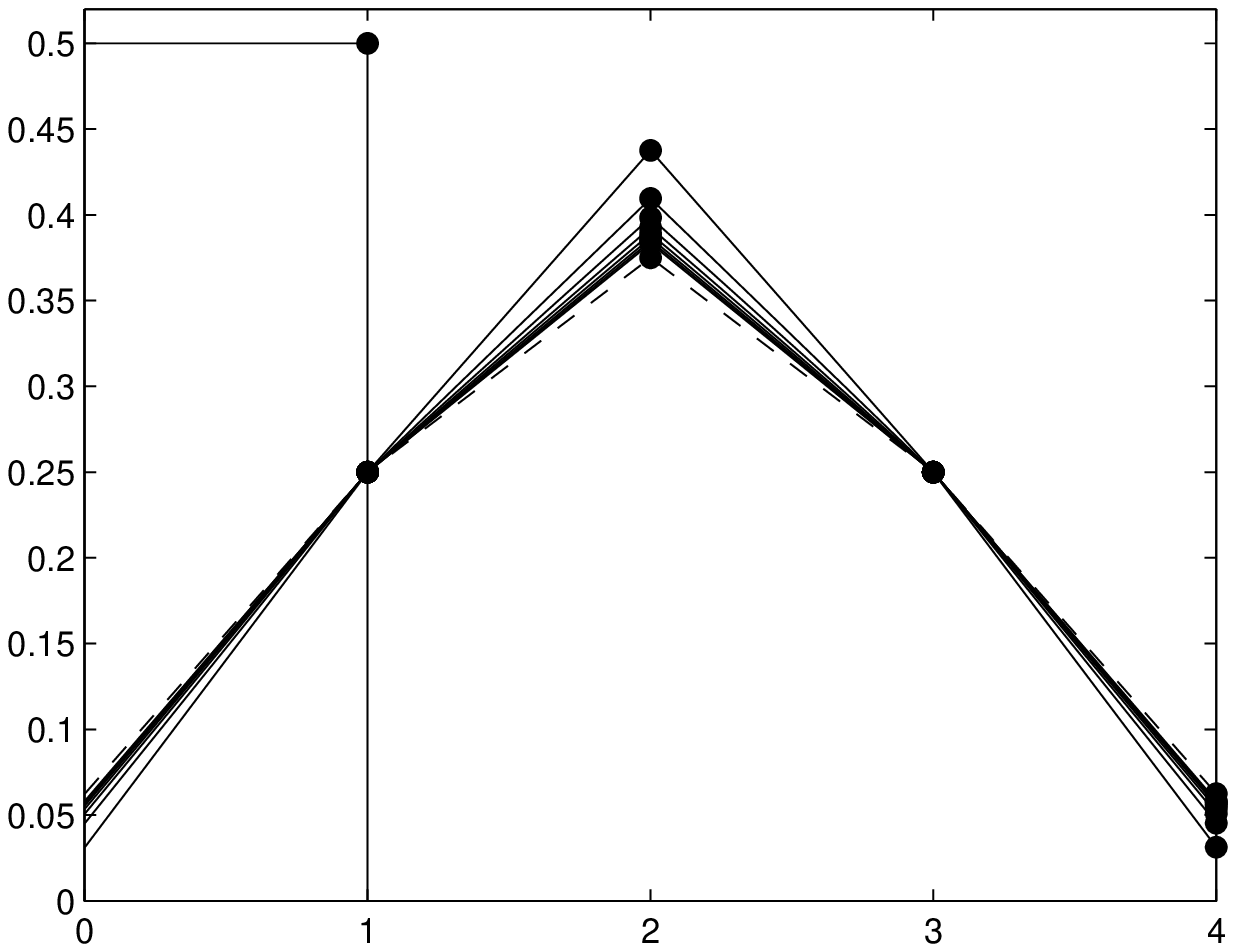}
& \quad &
\includegraphics[width=5cm]{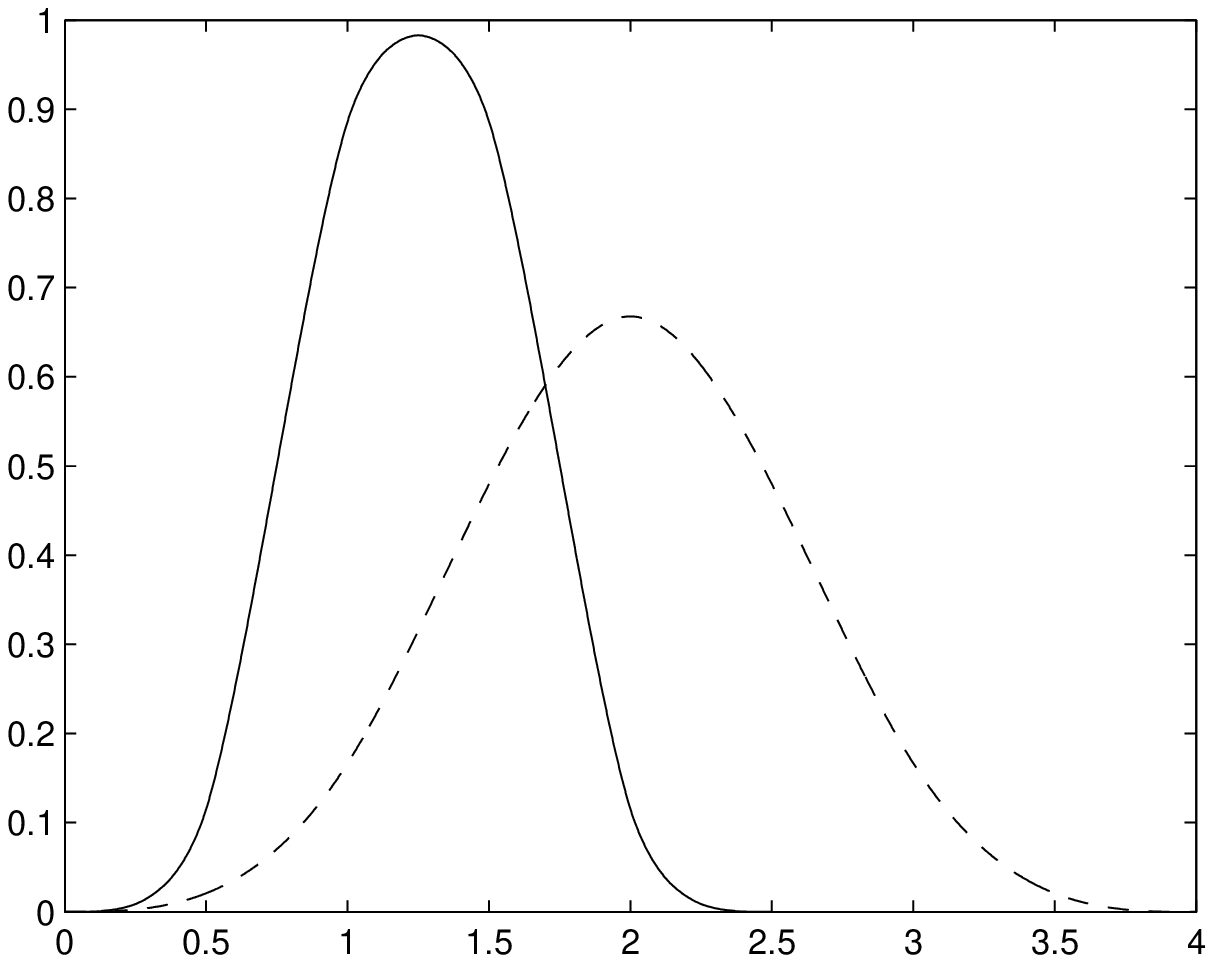}
\end{tabular}
\caption{The nonstationary mask coefficients listed in Tab.~\ref{Tab_1} (left) and $\varphi^{(3,0)}$ (right). 
The stationary mask of the cubic B-spline and the cubic B-spline itself are also displayed (dashed line)}
\label{Fig_1}
\end{figure}
The nonstationary prewavelets $\psi^{(3,m)}$ are given by:
$$ 
\begin{array}{l}
\displaystyle \psi^{(3,0)} = \,\sum_{\alpha = -3}^{4} \, (-1)^\alpha \, g_{\alpha-1}^{(3,0)} \ 
\phi^{(3,1)}(\cdot-2^{-1}\alpha)\,, \\ \\
\displaystyle \psi^{(3,m)} = \,\sum_{\alpha =-6}^{4} \, (-1)^\alpha \, g_{\alpha-1}^{(3,m)} \ 
\phi^{(3,m+1)}(\cdot-2^{-(m+1)}\alpha)\,,  \qquad m>0\,,
\end{array}
$$
where the prewavelet coefficients $\{g_{\alpha}^{(3,n)}\}$ can be evaluated by the algorithm in Th.~\ref{Th_4.3}. 
From (\ref{supp_psi}) it follows that $supp \, \psi^{(3,0)} =[-3/2,4]$, while for $m>0$ $supp \, \psi^{(3,m)} =[-2^{-m}\,3,2^{-m}\,4]$.
We notice that $\psi^{(3,0)}$ is more localized in the scale-time plane than both $\psi^{(3,m)}$, $m>0$, and the B-spline prewavelet.
The prewavelet mask coefficients, rounded to the forth digit, are
$g^{(3,0)}_{-4}=-g^{(3,0)}_3=-0.0015$,  $g^{(3,0)}_{-3}=-g^{(3,0)}_2=0.0259$,
$g^{(3,0)}_{-2}=-g^{(3,0)}_1=-0.1479$,  $g^{(3,0)}_{-1}=-g^{(3,0)}_0=0.3244$.
\\
In Fig.~\ref{Fig_2} the behavior of $\{g^{(3,0)}_\alpha\}$ and $\psi^{(3,0)}$ are displayed.
\begin{figure}
\centering
\begin{tabular}{ccc}
\includegraphics[width=5cm]{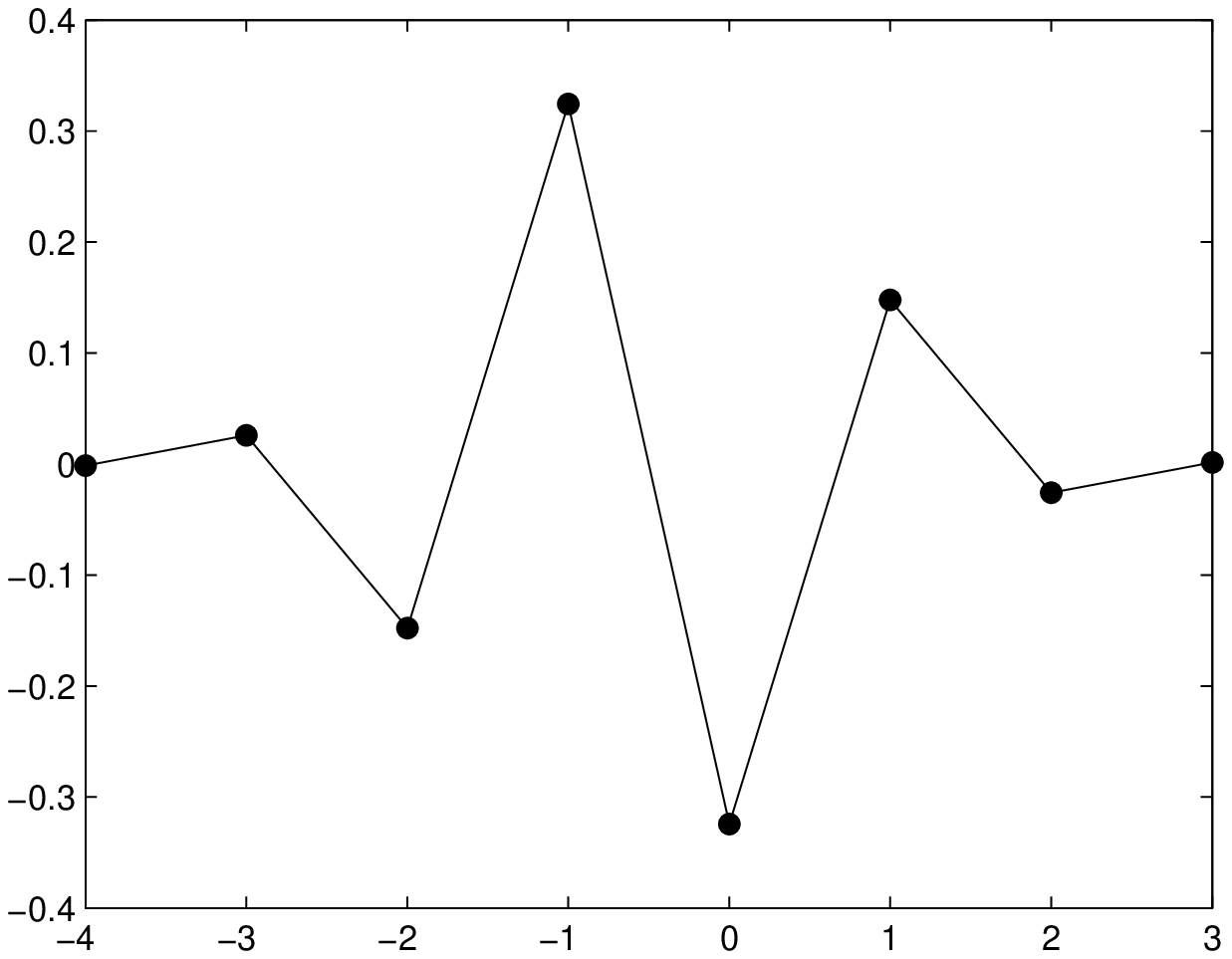}
& \quad &
\includegraphics[width=5cm]{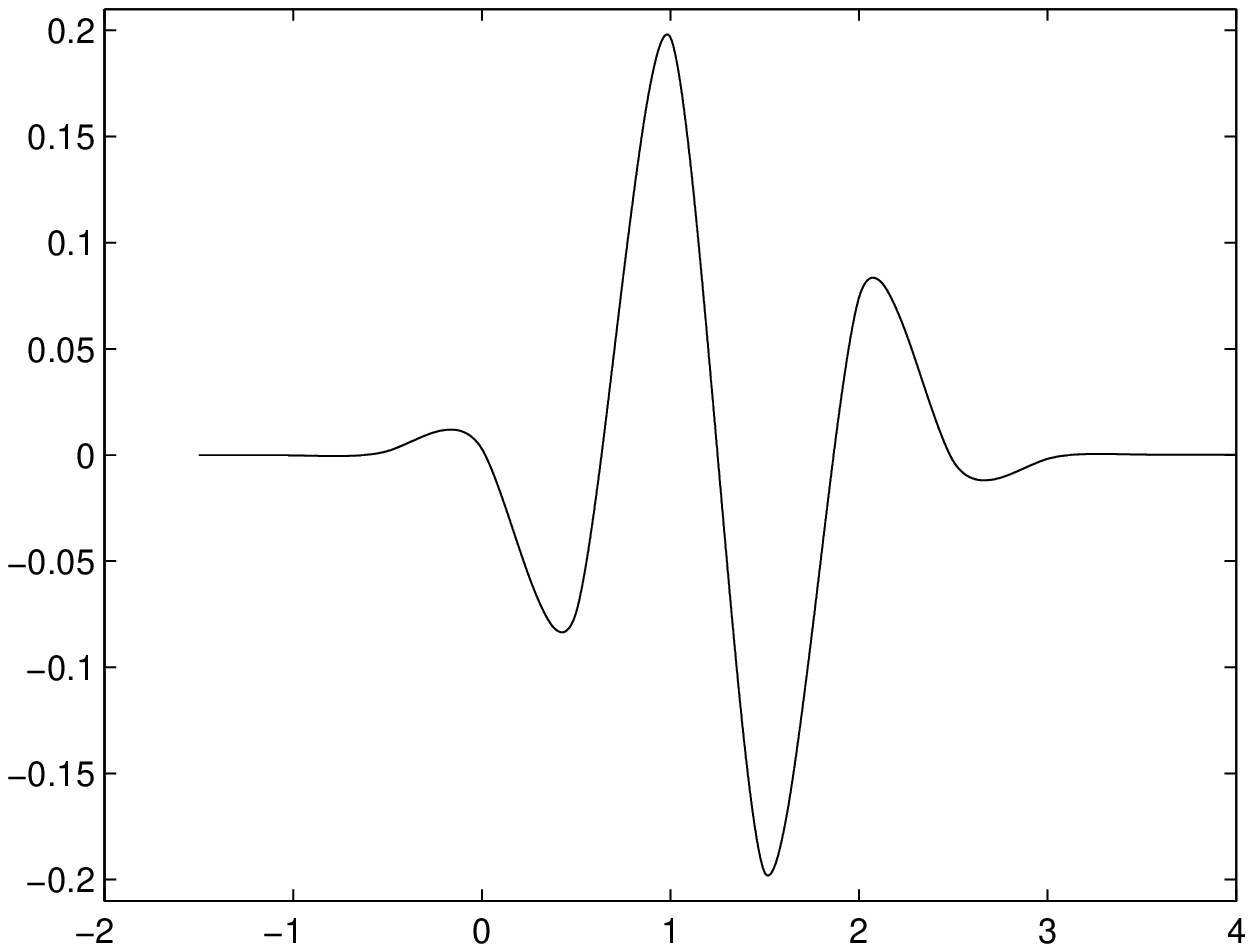}
\end{tabular}
\caption{The prewavelet mask $\{g^{(3,0)}\}$ (left) and $\psi^{(3,0)}$ (right) }
\label{Fig_2}
\end{figure}
Finally, we give the explicit expression of the biorthogonal masks $\widetilde {\bf a}^{(3,m)}$, $m\ge 0$.
It is well known that the biorthogonal mask of $\mathbf a^{(3,0)}$ is
$
\widetilde {\mathbf a}^{(3,0)} = \{\widetilde a^{(3,0)}_0, \widetilde a^{(3,0)}_1 \} = \bigl \{ \frac 1 2 , -\frac 1 2 \bigr \}.
$
\\
In order to fulfill conditions ensuring the existence of the biorthogonal refinable function $\widetilde \varphi^{(3,m)}$, for $m> 0$ 
we construct the biorthogonal mask $\widetilde {\bf a}^{(3,m)}$ with support $[0,14]$. Its explicit expression is given by

{\small \begin{tabular}{l}
$\widetilde {a}^{(3,m)}_0 = \widetilde a^{(3,m)}_{14} = \displaystyle \frac{8^{-3 - h}}{-4+2^h}(128 + 2^{6 + h} + 5\cdot4^{1 + h} + 5\cdot8^h)$ \\ \\
$\widetilde {a}^{(3,m)}_1 = \widetilde a^{(3,m)}_{13} = -\displaystyle \frac{4^{-5 - h}}{-4+2^h}(128 + 2^{6 + h} + 5\cdot4^{1 + h} + 5\cdot8^h)$ \\ \\
$\widetilde {a}^{(3,m)}_2 = \widetilde a^{(3,m)}_{12} = -\displaystyle \frac{8^{-3 - h}}{-4+2^h}(640 + 7\cdot2^{6 + h} + 33\cdot4^{1 + h} + 29\cdot8^h - 5\cdot16^h)$ \\ \\
$\widetilde {a}^{(3,m)}_3 = \widetilde a^{(3,m)}_{11} = \displaystyle \frac{2^{-9 - 2\cdot h}}{-4+2^h}(128 + 3\cdot2^{6 + h} + 17\cdot4^{1 + h} + 17\cdot8^h)$ \\ \\
$\widetilde {a}^{(3,m)}_4 = \widetilde a^{(3,m)}_{10} = \displaystyle \frac{8^{-3 - h}}{-4+2^h}(1152 + 15\cdot2^{6 + h} + 133\cdot4^{1 + h} + 89\cdot8^h - 39\cdot16^h)$ \\ \\
$\widetilde {a}^{(3,m)}_5 = \widetilde a^{(3,m)}_9 = \displaystyle \frac{4^{-5 - h}}{-4+2^h}(128 + 2^{6 + h} - 123\cdot4^{1 + h} - 123\cdot8^h)$ \\ \\
$\widetilde {a}^{(3,m)}_6 = \widetilde a^{(3,m)}_8 = -\displaystyle \frac{8^{-3 - h}}{-4+2^h}(640 + 9\cdot2^{6 + h} - 81\cdot2^{1 + 4\cdot h} + 105\cdot4^{1 + h} + 577\cdot8^h)$  \\ \\
$\widetilde {a}_7 = -\displaystyle \frac{4^{-4 - h}}{-4+2^h}(128 + 3\cdot2^{6 + h} + 81\cdot4^{1 + h} - 175\cdot8^h)$ \\ \\
\end{tabular}}

\noindent
where $h = 3+m^{-\mu}$.
The biorthogonal mask coefficients $\{\widetilde a^{(3,m)}_\alpha\}$ (rounded to the forth digit) are listed in Tab.~\ref{Tab_2}.
In Fig.~\ref{Fig_3} the behavior of $\{\widetilde a^{(3,m)}_\alpha\}$ and $\widetilde \varphi^{(3,0)}$ are displayed.
The biorthogonal wavelet mask coefficients ${\bf q}^{(3,m)}$ and $\widetilde {\bf q}^{(3,m)}$ can be obtained by 
$
q^{(3,m)}_\alpha = (-1)^{\alpha} \, \widetilde a^{(3,m)}_{-\alpha+1}$,
$\widetilde q^{(3,m)}_\alpha = - (-1)^{\alpha} \, a^{(3,m)}_{-\alpha+1}
$.
In Fig.~\ref{Fig_4} the behavior of $\psi^{(3,0)}$ and $\widetilde \psi^{(3,0)}$ is displayed.

\begin{table}
\caption{Numerical values (rounded to the forth digit)  of the mask coefficients
$\widetilde {a}^{(3,m)}_\alpha$, for $\alpha=0,\ldots,7$, and $m=0,\ldots,8$}
\label{Tab_2}
{\footnotesize 
\begin{tabular}{|c|ccccccccc|}
\hline
$m$ & 0 & 1 & 2 & 3 & 4 & 5 & 6 & 7 & 8 \\
\hline
 $\widetilde {a}^{(3,m)}_0 $ & 0.5 &  0.0011 &  0.0021 &  0.0026 &  0.0030 &  0.0064 &  0.0034 &  0.0035 &  0.0036 \\
 $\widetilde {a}^{(3,m)}_1 $ & 0.5 & -0.0085 & -0.0114 & -0.0129 & -0.0138 & -0.0288 & -0.0148 & -0.0151 & -0.0154 \\
 $\widetilde {a}^{(3,m)}_2 $ &  0  &  0.0066 &  0.0028 &  0.0005 & -0.0010 & -0.0039 & -0.0027 & -0.0032 & -0.0036 \\
 $\widetilde {a}^{(3,m)}_3 $ &  0  &  0.0574 &  0.0760 &  0.0857 &  0.0914 &  0.1905 &  0.0979 &  0.0999 &  0.1014 \\
 $\widetilde {a}^{(3,m)}_4 $ &  0  & -0.0810 & -0.0790 & -0.0768 & -0.0752 & -0.1480 & -0.0732 & -0.0725 & -0.0720 \\
 $\widetilde {a}^{(3,m)}_5 $ &  0  & -0.1998 & -0.5108 & -0.2834 & -0.2998 & -0.6211 & -0.3180 & -0.3236 & -0.3278 \\
 $\widetilde {a}^{(3,m)}_6 $ &  0  &  0.3233 &  0.3241 &  0.3237 &  0.3232 &  0.6456 &  0.3225 &  0.3222 &  0.3220 \\
 $\widetilde {a}^{(3,m)}_7 $ &  0  &  0.8019 &  0.8816 &  0.9212 &  0.9443 &  1.9187 &  0.9698 &  0.9776 &  0.9835 \\
 \hline
\end{tabular}
}
\end{table}

\begin{figure}
\centering
\begin{tabular}{ccc}
\includegraphics[width=5cm]{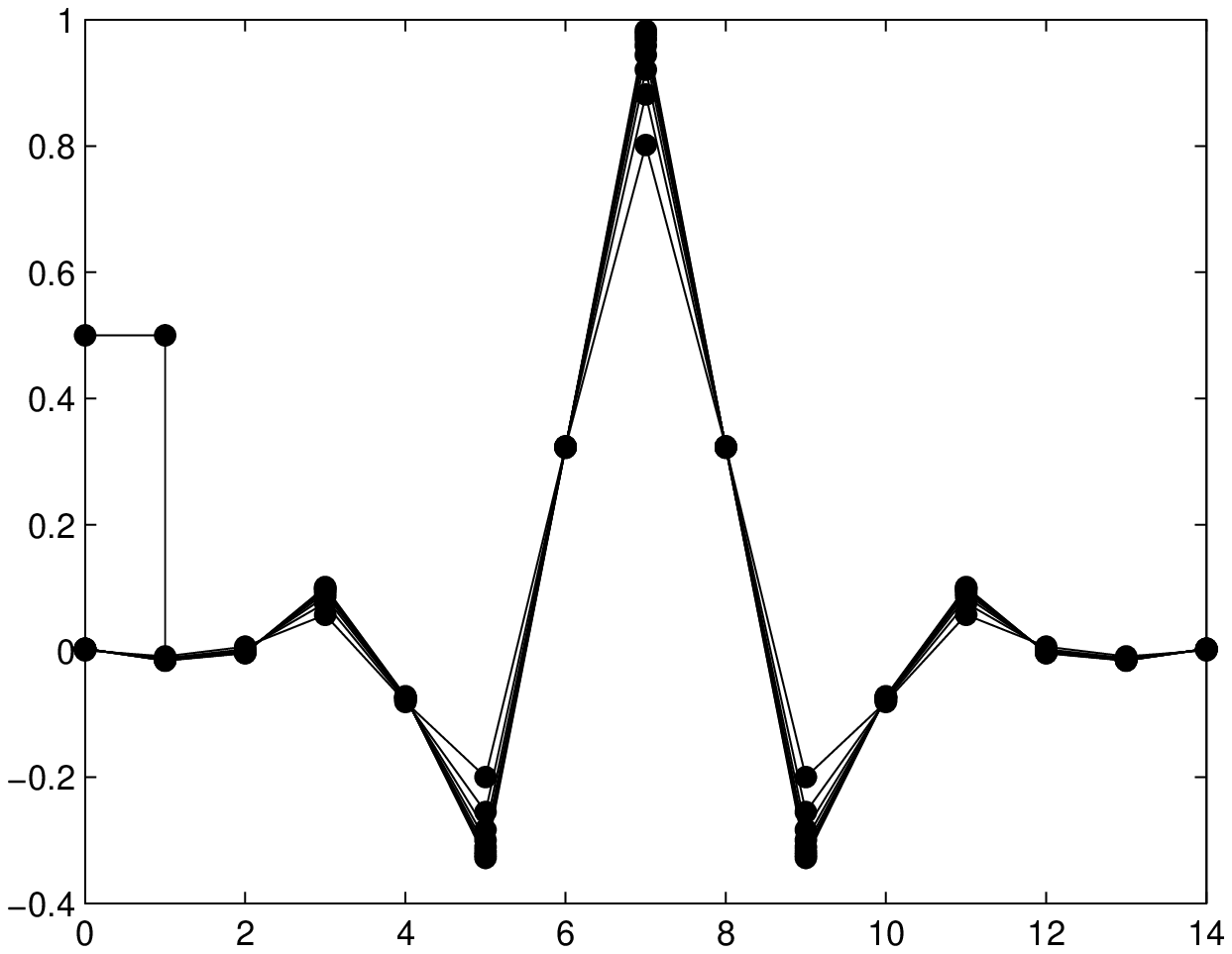}
& \quad &
\includegraphics[width=5cm]{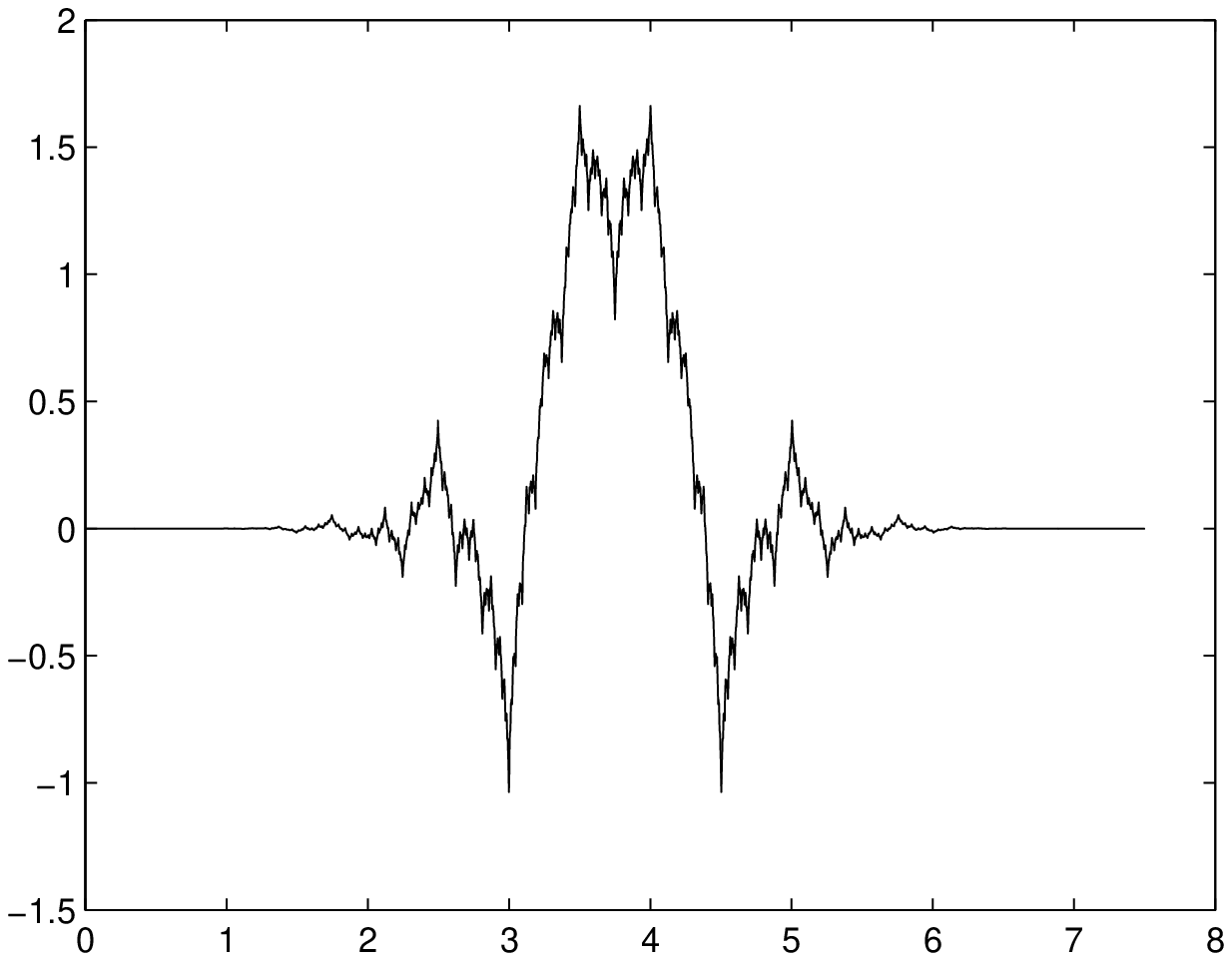}
\end{tabular}
\caption{The first 8 nonstationary biorthogonal masks 
$\widetilde {\bf a}^{(3,m)}$ (left) and $\widetilde \varphi^{(3,0)}$ (right)}
\label{Fig_3}
\end{figure}
Just to show how the properties of the constructed nonstationary biorthogonal filters can affect the analysis of a given signal, 
we evaluate the coefficients $\{ \lambda_\alpha^m \}$ and $\{ \zeta_\alpha^m \}$, obtained after three steps of the
decomposition algorithm (\ref{dec_alg}), when the starting sequence is a spike-like signal (see Fig.~\ref{Fig_5} (left)).
The coefficients are plotted in Fig.~\ref{Fig_5} (right) in comparison with the coefficients obtained when using the stationary
cubic spline biorthogonal filters. The figure shows that the nonstationary decomposition algorithm has higher compression properties: 
actually the number of nonzero coefficients are 26 in the nonstationary case, while they are 39 in the B-spline case.  

\begin{figure}
\centering
\begin{tabular}{ccc}
\includegraphics[width=5cm]{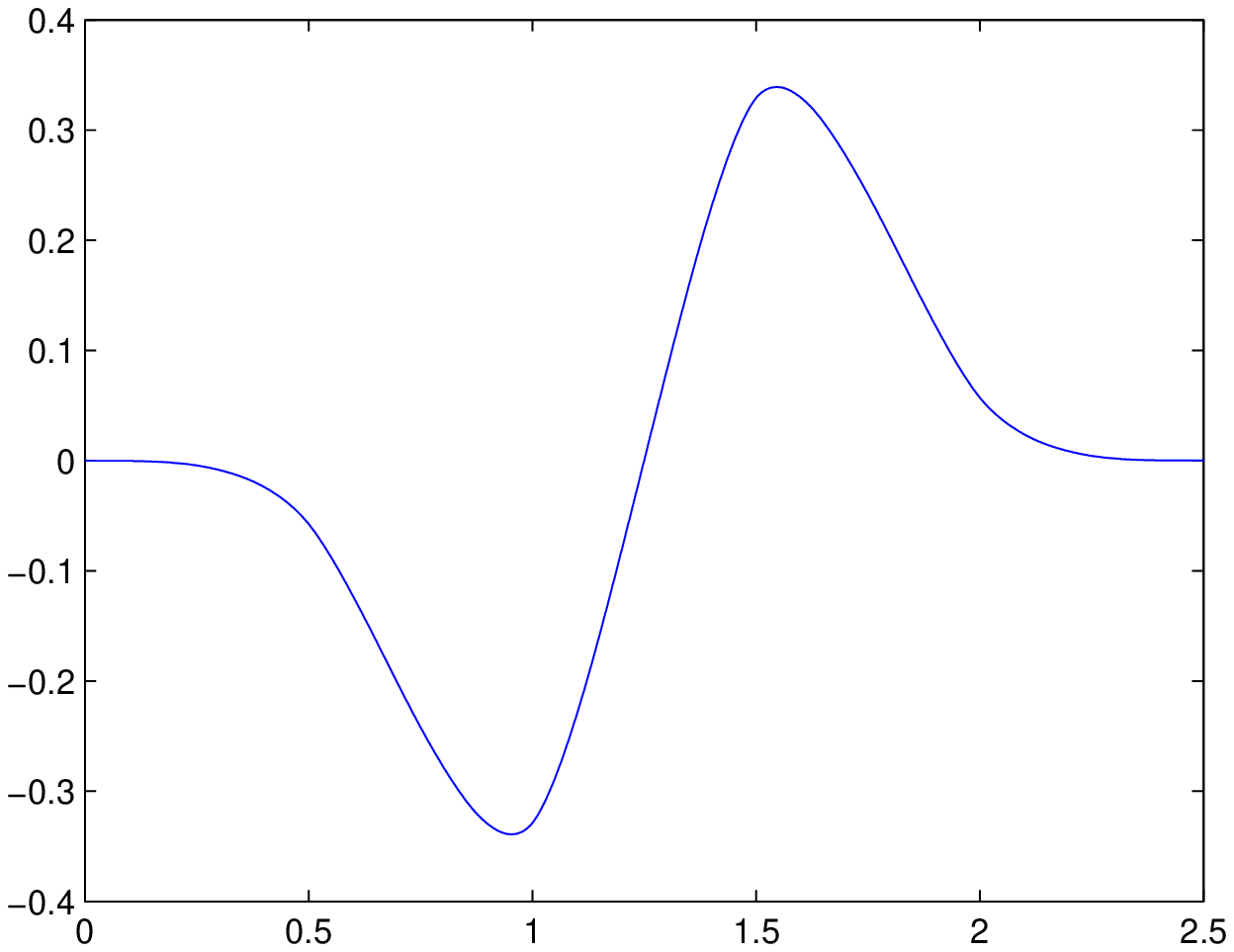}
& \quad &
\includegraphics[width=5cm]{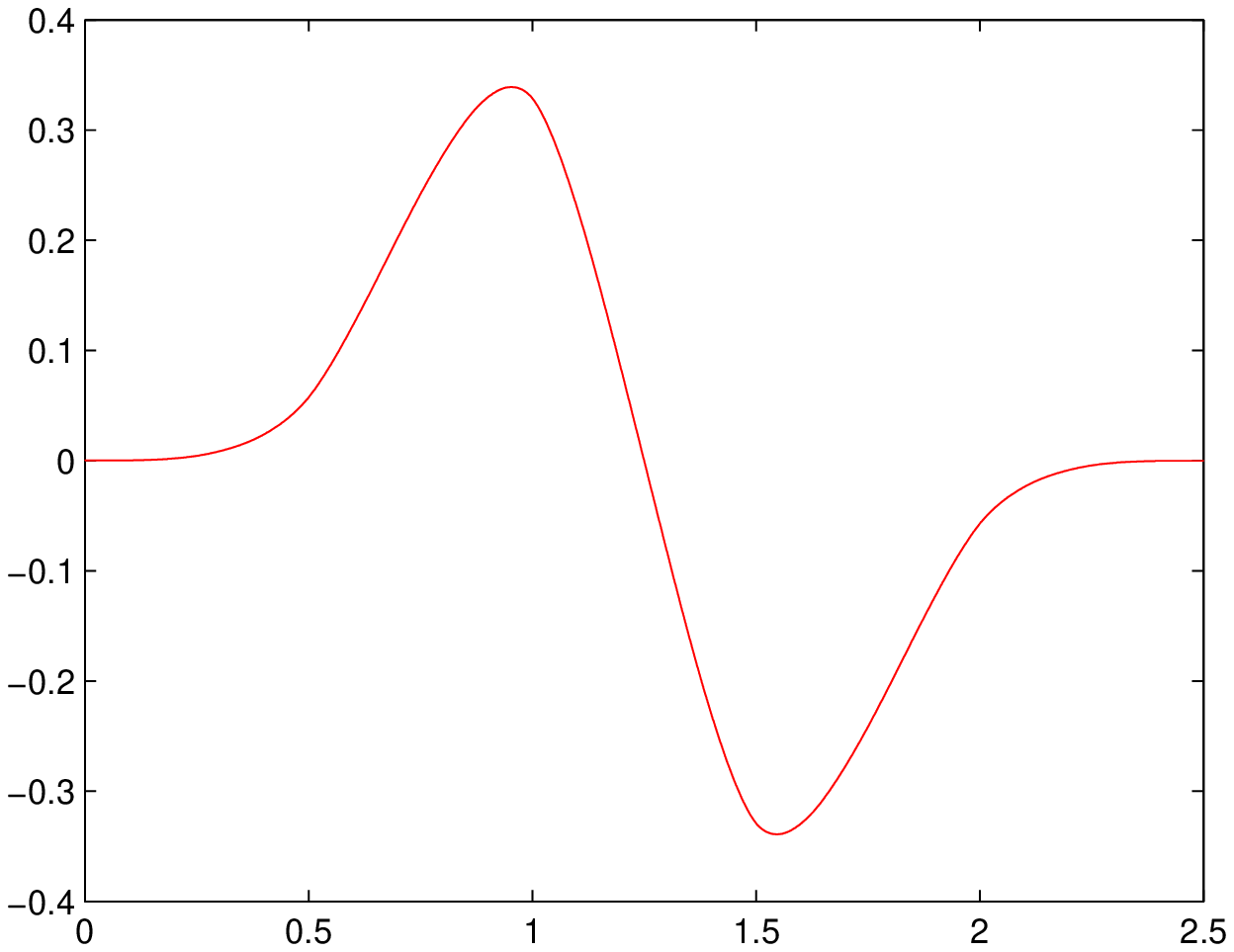}
\end{tabular}
\caption{Graphs of $\psi^{(3,0)}$ (left) and $\widetilde \psi^{(3,0)}$ (right)}
\label{Fig_4}
\end{figure}

\begin{figure}
\centering
\begin{tabular}{ccc}
\includegraphics[width=5cm]{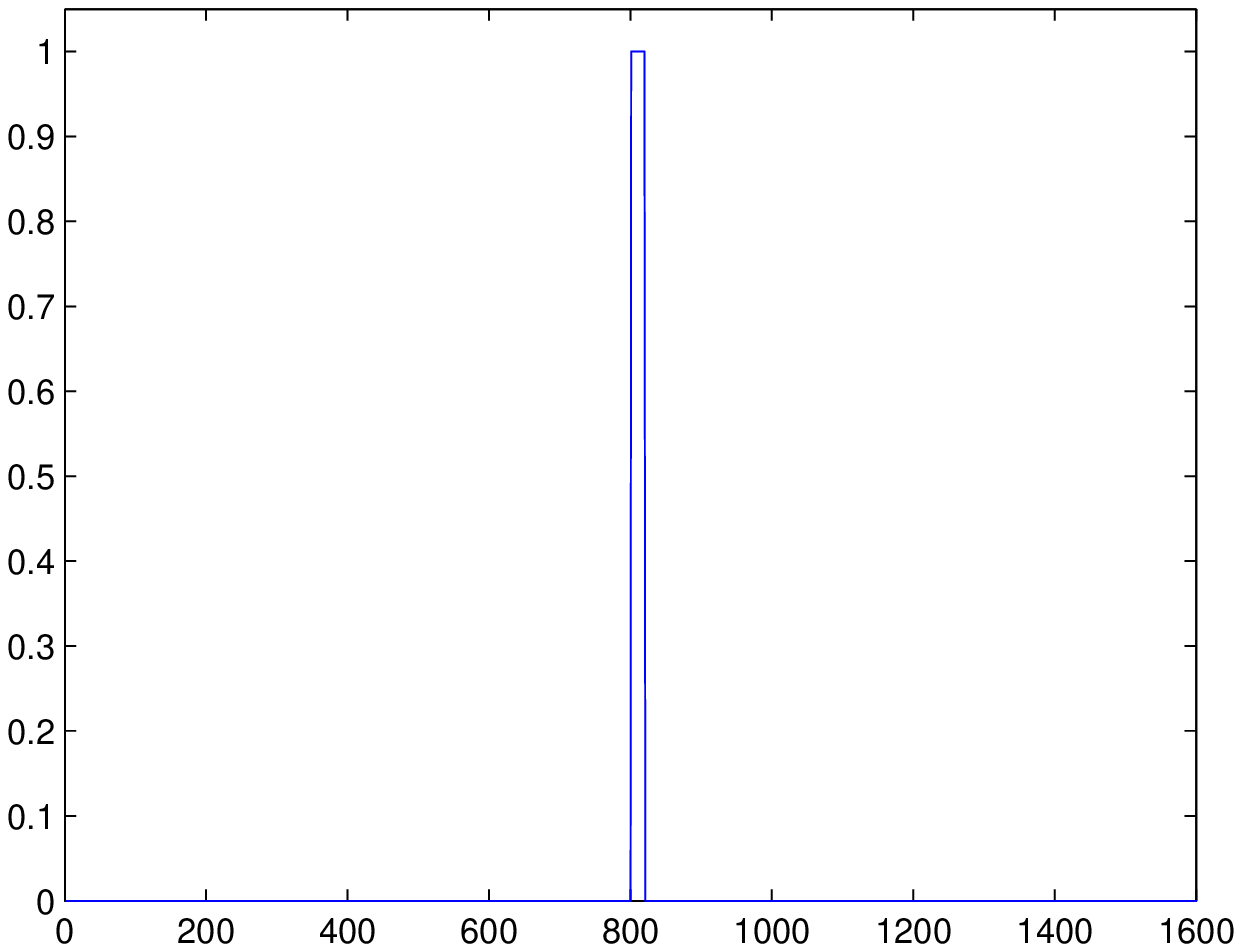}
& \quad &
\includegraphics[width=5cm]{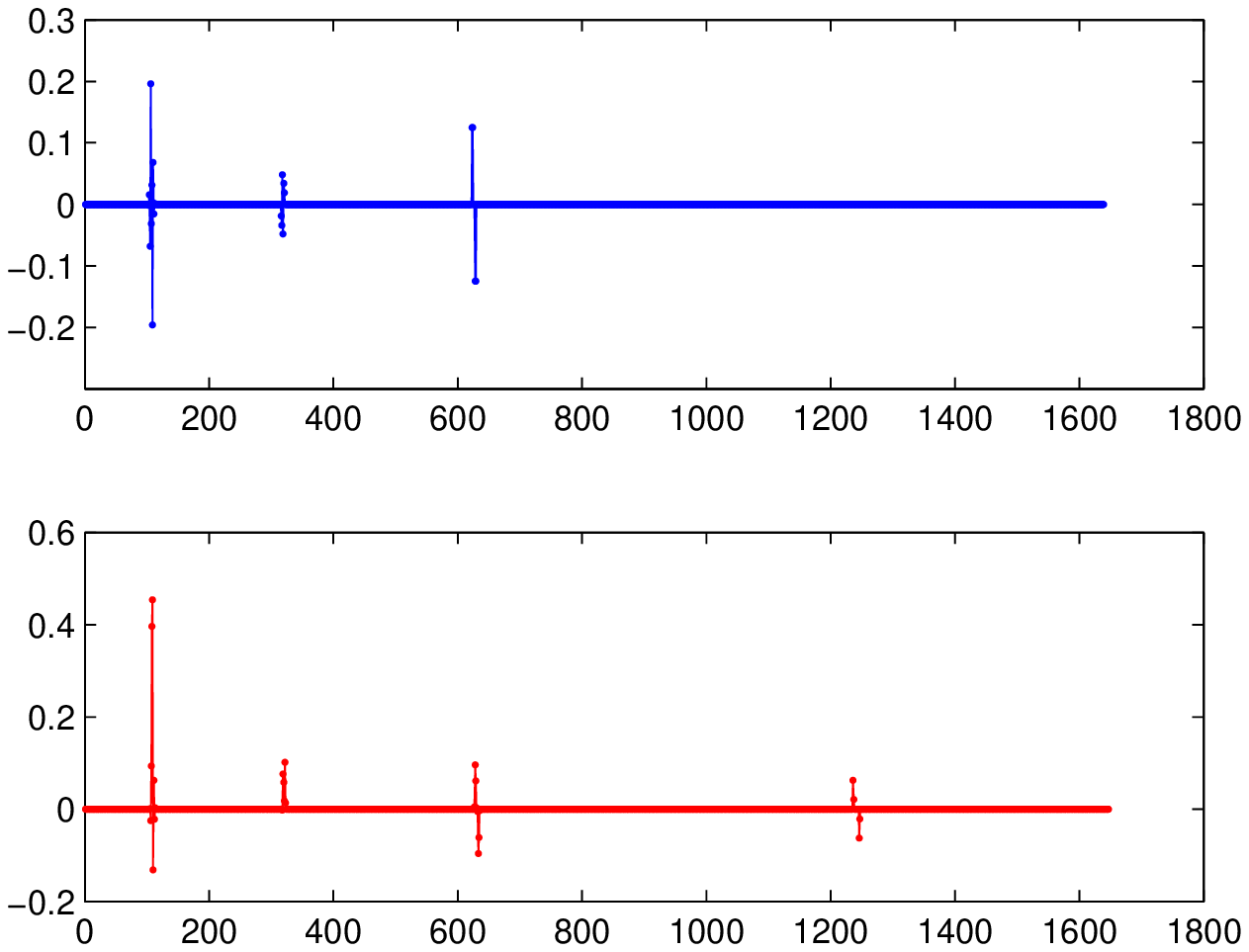}
\end{tabular}
\caption{The spike-like initial sequence  (left) and the decomposition coefficients obtained after 3 steps of the nonstationary decomposition algorithm (upper right). The decomposition coefficients obtained by the stationary cubic biorthogonal filters are also shown (bottom right)}
\label{Fig_5}
\end{figure}

\section{Conclusion}

We studied the properties of a class of refinable ripplets associated with sequences of nonstationary scaling masks. 
One of the most interesting property of these functions is in that they have a smaller support than 
the stationary refinable ripplets with the same smoothness. This localization property is crucial in several applications,
from geometric modeling to signal processing. 
\\
After proving some approximation properties, such as Strang--Fix conditions, polynomial reproduction and approximation order, 
we proved also that any refinable function in the family is bell-shaped, so that they can efficiently approximate a Gaussian.
Moreover, since these refinable functions generate nonstationary multiresolution analyses, we constructed the minimally supported nonstationary prewavelets and proved that their $2^m$-shifts form $L_2$-stable bases. 
We note that this construction can be generalized to other class of nonstationary refinable functions, like exponential splines.
Moreover, we constructed nonstationary biorthogonal bases and filters to be used in efficient decomposition and reconstruction algorithms.
\\
The localization property of the refinable ripplets we studied implies that the corresponding nonstationary wavelets have a small support too, a property which is very desirable in the case when the relevant information of a function to be approximated or of a signal to be analyzed are focused in small regions of the scale-time plane.
The preliminary test in Section 6 shows the good performances of the constructed nonstationary wavelets in a simple compression test. More tests will be the subject of a forthcoming paper.



\end{document}